\documentclass[11pt]{amsart}

\usepackage[utf8]{inputenc}
\usepackage[margin=3cm]{geometry}
\usepackage{amsmath}
\usepackage{amssymb}
\usepackage{amsthm}
\usepackage[dvipsnames]{xcolor}
\usepackage[colorlinks=true,allcolors=NavyBlue]{hyperref}
\usepackage{fancyhdr}
\usepackage{blkarray}
\usepackage{multirow}
\usepackage{hhline}
\usepackage{xfrac,mathtools}

\newtheorem{defn0}{Definition}[section]
\newtheorem{prop0}[defn0]{Proposition}
\newtheorem{thm0}[defn0]{Theorem}
\newtheorem{lemma0}[defn0]{Lemma}
\newtheorem{corollary0}[defn0]{Corollary}
\newtheorem{example0}[defn0]{Example}
\newtheorem{remark0}[defn0]{Remark}
\newtheorem{conjecture0}[defn0]{Conjecture}
\newtheorem{code0}[defn0]{Code}

\newenvironment{definition}{\medskip \begin{defn0}}{\end{defn0}}
\newenvironment{proposition}{\medskip \begin{prop0}}{\end{prop0}}
\newenvironment{theorem}{\medskip \begin{thm0}}{\end{thm0}}
\newenvironment{lemma}{\medskip \begin{lemma0}}{\end{lemma0}}

\newenvironment{example}{\medskip \begin{example0}\rm}{\end{example0}}

\newenvironment{code}{\medskip\begin{code0}}{\end{code0}}

\newcommand{\ad}{\operatorname{adj}}
\newcommand{\tr}{\operatorname{tr}}
\newcommand{\sgn}{\operatorname{sgn}}
\newcommand{\PD}{\operatorname{PD}}
\DeclareMathOperator*{\argmax}{argmax}
\DeclareMathOperator*{\argmin}{argmin}

\numberwithin{equation}{section}

\title{Rational Maximum Likelihood Estimators of Kronecker Covariance Matrices}
\author{Mathias Drton, Alexandros Grosdos, Andrew McCormack}
\address{School of Computation, Information and Technology, Technical University of Munich, Germany 
and
Institute of Mathematics, Augsburg University, Germany} 
\email{\{mathias.drton, andrew.mccormack\}@tum.de, alexandros.grosdos@uni-a.de}
\date{\today}

\begin{document}

\maketitle

\begin{abstract}
As is the case for many curved exponential families, the computation of maximum likelihood estimates in a multivariate normal model with a Kronecker covariance structure is typically carried out with an iterative algorithm, specifically, a block-coordinate ascent algorithm. In this article we highlight a setting, specified by a coprime relationship between the sample size and dimension of the Kronecker factors, where the likelihood equations have algebraic degree one and an explicit, easy-to-evaluate rational formula for the maximum likelihood estimator can be found. A partial converse of this result is provided that shows that outside of the aforementioned special setting and for large sample sizes, examples of data sets can be constructed for which the degree of the likelihood equations is larger than one.

\smallskip
\noindent\textit{Keywords:} Gaussian distribution, Kronecker covariance; matrix normal model; maximum likelihood degree; maximum likelihood estimation. 
\end{abstract}

\section{Introduction}
\label{sec:introduction}


\subsection{Matrix normal distributions and Kronecker covariance matrices}

Matrix-variate data arises in numerous areas of science, such as medicine \cite{Shvartsman2018MatrixnormalMF,Yin2012}, economics \cite{chan2020large} and signal processing \cite{werner2008estimation}.  Matrix data is often high-dimensional, where a random matrix $Y \in \mathbb{R}^{m_1 \times m_2}$ can be viewed as a random vector in $\mathbb{R}^{m_1m_2}$ by vectorizing $Y$.  Even for modest row and column dimensions $m_1$ and $m_2$, $\text{vec}(Y)$ is a vector in a high-dimensional space, which makes it challenging to reliably infer dependencies among the entries of $Y$.  Indeed, even in a multivariate normal model that assumes $\text{vec}(Y) \sim \mathcal{N}(0,\Sigma)$ to be normally distributed with a zero mean, a total of $\binom{m_1m_2 + 1}{2}$ parameters have to be estimated in the covariance matrix $\Sigma$. A large number of independent random matrix observations from this distribution are needed to find accurate estimates of $\Sigma$. However, it is often difficult, expensive, or outright impossible to obtain such a large sample.  These high-dimensionality issues can be mitigated by imposing structural constraints on $\Sigma$, which can improve both the quality and interpretability of parameter estimates, assuming that these constraints approximately hold \cite{hoff2022core}.

For matrix data that exhibit similar correlations across rows as well as columns, a particularly useful structural constraint is to assume that $\Sigma\in\PD(m_1m_2)$ has a Kronecker product structure, i.e.,
\begin{equation}
    \label{eq:KronMatrix}
    \Sigma = \Sigma_2\otimes\Sigma_1,
\end{equation}
where $\Sigma_j \in \PD(m_j)$, $j=1,2$, and we write $\PD(m)$ for the cone of positive definite $m\times m$ matrices.  Keeping with multivariate normality, we write 
\begin{align}
\label{eqn:KronModel}
    Y \sim \mathcal{N}(0, \Sigma_2 \otimes \Sigma_1)
\end{align}
to express that the column-wise vectorization of $\text{vec}(Y)$ is a multivariate normal random vector with mean zero and covariance matrix $\Sigma_2 \otimes \Sigma_1$.  Such a distribution is also referred to as a {matrix normal distribution} for $Y$ \cite{dawid1981some}.  The matrix $\Sigma_1$ has an interpretation as the `column-covariance' and $\Sigma_2$ the `row-covariance', where the covariance between any two columns of $Y$ is proportional to $\Sigma_1$, and similarly for the rows and $\Sigma_2$. The Kronecker covariance model provides a compact representation of the covariance structure of a random matrix, having a parameter space of dimension $\binom{m_1 + 1}{2}+\binom{m_2 + 2}{2} - 1$, a number that in typical applications is far smaller than the dimension $\binom{m_1m_2 + 1}{2}$ of the unconstrained model.

From a slightly different perspective, the matrix normal distributions are the distributions obtained through bilinear transformations of a random matrix $Z \in \mathbb{R}^{m_1 \times m_2}$ that is filled with independent, standard normal random variables.  Indeed, if $A$ and $B$ are deterministic matrices of dimensions $m_1 \times m_1$ and $m_2 \times m_2$ respectively, then 
\[
AZB \;\sim\; \mathcal{N}(0, \Sigma_2 \otimes \Sigma_1) \quad\text{with}\quad \Sigma_1 = AA^{\top}, \ \Sigma_2 = BB^{\top};
\]
see also \cite{Gupta1999matrix} for further discussion of matrix normal distributions.  We note that although we focus on the matrix setting in this work, the matrix normal model can be generalized to higher-order arrays, through multilinear multiplication along the modes of a tensor \cite{Franks2021,Gerard2015}.   Without loss of generality, we consider distributions with mean matrix equal to zero.  Indeed, this assumption does not affect the generality of our later results on maximum likelihood estimation.


\subsection{Maximum likelihood estimation}

Let $Y_1,\dots,Y_n$ be a sample of independent and identically distributed random matrices in $\mathbb{R}^{m_1\times m_2}$, and consider the problem of estimating the covariance matrix of the underlying joint distribution.
Even with the dimensionality reduction of the matrix normal model, the sample sizes for matrix-variate data can be small compared to the dimension of the parameter space. It is therefore important to study the behaviour of estimators, in particular, the maximum likelihood estimator (MLE), in small sample settings. Previous work of \cite{Soloveychik2016,Drton2021,Makam2021} examines thresholds on the sample size and matrix dimensions $m_1,m_2$ needed for existence and uniqueness of the MLE of the positive definite covariance matrix $\Sigma_2\otimes \Sigma_1$ in the Kronecker covariance model from \eqref{eqn:KronModel}; Theorems 1.2 and 1.3 in \cite{Makam2021} fully determine these thresholds.  Subsequently, we refer to the estimator as the Kronecker MLE.

Building upon the growing literature on Kronecker covariance estimation, in this article we study the problem of finding the Kronecker MLE from an algebraic perspective. A focus of this work is determining the number of solutions to the rational Kronecker likelihood equations. We introduce the new notion of maximum likelihood multiplicity that refers to the number of complex solutions of the likelihood equations for a specified data set:
\begin{definition}[ML multiplicity]
\label{def:MLMultiplicity}
    Let $\nabla_\theta \ell(\theta;Y_1,\ldots,Y_n)$ be the gradient of the log-likelihood function of a model where this gradient has components that are rational functions of the parameter $\theta$. The maximum likelihood (ML) multiplicity at $Y_1,\ldots,Y_n$, $\mathrm{MLMult}(Y_1,\ldots,Y_n)$, is equal to the number of complex solutions $\theta \in \mathbb{C}^p$ to the likelihood equations  $\nabla_\theta \ell(\theta;Y_1,\ldots,Y_n) = 0$, which are sometimes also referred to as the score equations. 
\end{definition}
For the majority of statistical models the parameter space is contained in $\mathbb{R}^p$ so that the MLE is necessarily real-valued. By extending the domain of the likelihood equations from $\mathbb{R}^p$ to $\mathbb{C}^p$ techniques from algebraic geometry and computational commutative algebra can more easily be utilized. 

In the extensive literature studying the algebra of likelihood equations, a central quantity of interest is the ML degree \cite{Drton2009,sullivant:2018}, which is the ML multiplicity of a model for generic data:
\begin{definition}[ML degree]
    The maximum likelihood (ML) degree of a model is equal to $d$ if $\mathrm{MLMult}(Y_1,\ldots,Y_n) = d$ for generic $(Y_1,\ldots,Y_n)$.
\end{definition}
A property $\mathcal{P}(y)$ is said to hold generically if there exists a proper algebraic variety $V \subsetneqq \mathbb{R}^N$ such that $\mathcal{P}(y) = \texttt{True}$ for all $y \in \mathbb{R}^N \backslash V$. As proper varieties have Lebesgue measure zero, a property that holds generically also holds with probability one for any probability distribution over $\mathbb{R}^N$ that has a density with respect to Lebesgue measure.

In contrast to other work on ML degrees \cite{AmendolaLinearPSD,CoonsGaussianCov,catanese2006maximum}, the sample size $n$ plays an important role in determining the ML degree for the Kronecker likelihood. In \cite{Drton2021}, one setting where the Kronecker MLE is generically a rational function of the data, or equivalently the ML degree is one, has been identified. 




\begin{theorem}
\label{thm:m1=m2+1}
If $m_1=m_2+1$ and $n=2$ the Kronecker MLE exists uniquely and is a rational function of the sample $Y = (Y_1,Y_2)$ for generic $Y$ in $\mathbb{R}^{m_1 \times 2m_2}$.
\end{theorem}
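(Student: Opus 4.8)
The plan is to exploit the large symmetry group of the problem together with a dimension count that is special to the case $m_1=m_2+1$, $n=2$. Writing $m_2=k$ and $m_1=k+1$, the log-likelihood for data $Y=(Y_1,Y_2)$ is, up to constants, $-k\log\dt\Sigma_1-(k+1)\log\dt\Sigma_2-\tfrac12\sum_{i=1}^2\tr(\Sigma_1^{-1}Y_i\Sigma_2^{-1}Y_i^\top)$, whose stationarity conditions are the flip-flop fixed-point equations
\begin{align}
2k\,\Sigma_1 &= Y_1\Sigma_2^{-1}Y_1^\top+Y_2\Sigma_2^{-1}Y_2^\top, & 2(k+1)\,\Sigma_2 &= Y_1^\top\Sigma_1^{-1}Y_1+Y_2^\top\Sigma_1^{-1}Y_2.
\end{align}
Two observations are central. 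First, the only non-identifiability is the scaling $(\Sigma_1,\Sigma_2)\mapsto(c\Sigma_1,c^{-1}\Sigma_2)$, which I remove by passing to the identifiable product $\Sigma=\Sigma_2\otimes\Sigma_1$ (equivalently, by fixing a normalization of $\Sigma_2$). Second, the equations are equivariant under $G=\mathrm{GL}(k+1)\times\mathrm{GL}(k)$ acting by $Y_i\mapsto AY_iB^\top$, $\Sigma_1\mapsto A\Sigma_1A^\top$, $\Sigma_2\mapsto B\Sigma_2B^\top$; since this action permutes complex solution sets bijectively, $\mathrm{MLMult}$ is constant on $G$-orbits of the data over $\mathbb{C}$.

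First I would record the dimension count that makes $m_1=m_2+1$, $n=2$ special. The complex data space $V=(\mathbb{C}^{(k+1)\times k})^2$ has dimension $2k(k+1)$, while $G$ has dimension $(k+1)^2+k^2$ and acts on $V$ with one-dimensional kernel $\{(cI,c^{-1}I)\}$; thus $\dim G-1=2k^2+2k=\dim_{\mathbb{C}}V$. The case $n=2$ lets me view the data as a matrix pencil $sY_1+tY_2$ of rectangular size $(k+1)\times k$, and it is classical that a generic such pencil has Kronecker canonical form equal to a single minimal-index (bidiagonal) block with \emph{no} elementary divisors, hence no continuous moduli. Consequently the generic pencils form a single Zariski-dense $G$-orbit $O$, i.e. $(G,V)$ is prehomogeneous, and generic $Y$ lies in $O$. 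Because $\mathrm{MLMult}$ is an orbit invariant, it suffices to analyze the score equations at one representative of $O$. (This is exactly where coprimality enters: for square pencils the generic form carries $k$ eigenvalue moduli, so there is no dense orbit and one should not expect degree one.)

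The crux, and the step I expect to be the main obstacle, is to show that at such a representative the score equations have \emph{exactly one} complex solution $\Sigma$ and that it is positive definite. I would bring $(Y_1,Y_2)$ to the explicit bidiagonal normal form and solve the resulting banded system directly; its near-triangular structure should determine $\Sigma_1,\Sigma_2$ by a recursion and force a single positive-definite solution. (Alternatively, one can argue via the Kempf--Ness/moment-map picture: since prehomogeneity forces the generic stabilizer to be exactly the one-dimensional scaling, the minimum-norm point of $O$ is unique up to the maximal compact subgroup, and a transversality computation shows there is a single complex critical orbit.) This simultaneously establishes existence and uniqueness of the Kronecker MLE and shows $\mathrm{MLMult}(Y)=1$ for generic $Y$; existence of a real positive-definite solution can also be imported from the sample-size thresholds of \cite{Makam2021,Drton2021}, which are satisfied here.

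Finally, rationality is then automatic. Treating the $2k(k+1)$ entries of $Y$ as indeterminates, the score equations are polynomial over the field $\mathbb{Q}(Y)$; a system with a unique solution in $\overline{\mathbb{Q}(Y)}$ has that solution fixed by every element of $\mathrm{Aut}(\overline{\mathbb{Q}(Y)}/\mathbb{Q}(Y))$, so each coordinate lies in the fixed field $\mathbb{Q}(Y)$. Hence the unique $\widehat\Sigma$ is a rational function of $Y$, and the Kronecker factors are recovered rationally from its blocks, which completes the proof.
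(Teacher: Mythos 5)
Your overall strategy---reduce to a single representative of the dense $\mathrm{GL}(m_1)\times\mathrm{GL}(m_2)$-orbit of generic pencils and use equivariance---is exactly the route the paper takes: the ten Berge canonical form $A(Y)Y_1B(Y)=\left[\begin{smallmatrix}I_{m_2}\\0\end{smallmatrix}\right]$, $A(Y)Y_2B(Y)=\left[\begin{smallmatrix}0\\I_{m_2}\end{smallmatrix}\right]$ is precisely the single $L_{m_2}^{\top}$ Kronecker block you describe, and your dimension count correctly explains why this case is prehomogeneous. The problem is that the step you yourself flag as ``the crux'' is never carried out. You write that bringing the pencil to bidiagonal form ``should determine $\Sigma_1,\Sigma_2$ by a recursion and force a single positive-definite solution,'' and alternatively that ``a transversality computation shows there is a single complex critical orbit''---but neither computation is done, and everything downstream depends on it. The paper closes exactly this gap by citing an explicit closed-form solution of the score equations at the canonical point (Proposition 7.1 of Drton et al.\ 2021, combined with their Lemmas 2.3 and 2.5); without that input, or your own solution of the banded system, the proof is incomplete.

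A second, more subtle issue: your rationality argument via $\mathrm{Aut}(\overline{\mathbb{Q}(Y)}/\mathbb{Q}(Y))$ requires that the score equations have a \emph{unique solution over the algebraic closure of the function field}, i.e.\ ML multiplicity one---a strictly stronger statement than uniqueness of the real positive-definite maximizer, which is all that can be ``imported'' from the existence/uniqueness thresholds of Makam et al. So the fallback you offer for existence does not rescue the rationality claim. Note that the paper's equivariance route avoids this entirely: once the MLE at the canonical point is an explicit constant, rationality of $\widehat{\Sigma}(Y)$ follows from rationality of the maps $A(Y),B(Y)$ (a fact the paper checks by inspecting ten Berge's algorithm, and which your write-up also omits), with no need to count complex critical points. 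Either supply the explicit solution at the normal form and verify $A(Y),B(Y)$ are rational in $Y$, or prove the ML-multiplicity-one statement directly; as written, the argument asserts the hard part rather than proving it.
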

\begin{proof}
    By \cite{ten1999simplicity}, for generic data $Y=(Y_1,Y_2)$, there exist two matrices $A(Y)\in\mathbb{R}^{m_1\times m_1}$ and $B(Y)\in\mathbb{R}^{m_2\times m_2}$ such that 
    \begin{equation}
    \label{eq:canonical}
    A(Y) Y_1 B(Y) = 
    \begin{pmatrix}
    I_{m_2}\\ 0
    \end{pmatrix}, \quad
        A(Y) Y_2 B(Y) = 
    \begin{pmatrix}
    0 \\
    I_{m_2}
    \end{pmatrix},
    \end{equation}
    where $I_{m_2}$ denotes the $m_2\times m_2$ identity matrix. Inspecting the algorithm in \cite{ten1999simplicity}, we observe that $A(Y)$ and $B(Y)$ are rational functions of $Y$.  For data in the canonical form in \eqref{eq:canonical}, the Kronecker MLE is obtained by combining the explicit formula in Proposition 7.1 of \cite{Drton2021} with their Lemma 2.3.  By Lemma 2.5 of \cite{Drton2021}, the Kronecker MLE for the original data $Y$ is seen to be a rational function of $Y$. 
\end{proof}

The main result of this paper identifies a further case where the Kronecker MLE is a rational function of the data.  The result stated now is proved as Theorem~\ref{prop:k1MLEExpression} in Section~\ref{sec:simplifying-likelihood}.

\begin{theorem}
\label{prop:k1MLEExpression-intro}
    If $m_1 +1 = n m_2$
the Kronecker MLE exists uniquely for a sample of $n$ generic matrices if and only if $n \geq m_2$, in which case it is a rational function of the sample $(Y_1,\dots,Y_n)$. 
\end{theorem}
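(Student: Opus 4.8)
The plan is to turn the score (flip-flop) equations into one explicit rational formula, using the fact that the hypothesis $nm_2=m_1+1$ lets us reshape the whole sample into a single $m_1\times(m_1+1)$ matrix. The stationarity conditions for the Gaussian Kronecker log-likelihood are
\[
\Sigma_1=\frac{1}{nm_2}\sum_{i=1}^n Y_i\Sigma_2^{-1}Y_i^{\top},\qquad \Sigma_2=\frac{1}{nm_1}\sum_{i=1}^n Y_i^{\top}\Sigma_1^{-1}Y_i .
\]
First I would record the standard equivalence available from \cite{Drton2021,Makam2021}: a pair $(\Sigma_1,\Sigma_2)\in\PD(m_1)\times\PD(m_2)$ solves these equations if and only if the whitened sample $\tilde Y_i=\Sigma_1^{-1/2}Y_i\Sigma_2^{-1/2}$ is isotropic, that is $\sum_i\tilde Y_i\tilde Y_i^{\top}=nm_2\,I_{m_1}$ and $\sum_i\tilde Y_i^{\top}\tilde Y_i=nm_1\,I_{m_2}$. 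Because the model identifies only $\Sigma_2\otimes\Sigma_1$, the positive-definite critical points form a one-parameter family $(t\Sigma_1,t^{-1}\Sigma_2)$, so it suffices to determine $\Sigma_2$ up to a scalar.

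The key step uses the concatenation $\mathbf Y=[\,Y_1\ \cdots\ Y_n\,]\in\mathbb R^{m_1\times nm_2}$, which by hypothesis has exactly one more column than rows. For generic data $\mathbf Y$ has rank $m_1$ and hence a one-dimensional kernel spanned by $v=(v^{(1)},\dots,v^{(n)})$ with $v^{(j)}\in\mathbb R^{m_2}$; expressing $v$ through the signed maximal minors of $\mathbf Y$ shows that $v$ is a polynomial function of $Y$, defined up to scale. I would then note that whitening acts on the kernel by $\tilde v^{(j)}=\Sigma_2^{1/2}v^{(j)}$, and that the first isotropy condition forces $\tilde{\mathbf Y}^{\top}\tilde{\mathbf Y}=nm_2\bigl(I_{nm_2}-\tilde v\tilde v^{\top}/\lVert\tilde v\rVert^2\bigr)$, a multiple of the orthogonal projection onto the row space of $\tilde{\mathbf Y}$.

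Summing the $n$ diagonal $m_2\times m_2$ blocks of this projection and imposing the second isotropy condition reduces the entire system to $\sum_j\tilde v^{(j)}(\tilde v^{(j)})^{\top}\propto I_{m_2}$; here the relation $nm_2-m_1=1$ is exactly what makes the trace bookkeeping of the block sum consistent with $\sum_i\tilde Y_i^{\top}\tilde Y_i=nm_1 I_{m_2}$. Translating back through $\tilde v^{(j)}=\Sigma_2^{1/2}v^{(j)}$ yields the manifestly rational formula
\[
\hat\Sigma_2\ \propto\ M^{-1},\qquad M:=\sum_{j=1}^n v^{(j)}(v^{(j)})^{\top},\qquad
\hat\Sigma_1=\frac{1}{nm_2}\sum_{i=1}^n Y_i\,M\,Y_i^{\top},
\]
the last equation being the flip-flop relation for $\Sigma_1$ once $\Sigma_2^{-1}=M$ is fixed.

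Both halves of the statement then follow from the rank of $M$. As $M$ is a sum of $n$ rank-one terms in $\PD(m_2)$, for generic data $M\succ0$ precisely when $n\ge m_2$, and in that regime $\hat\Sigma_2$ and $\hat\Sigma_1$ are positive-definite rational functions of $Y$; since every positive-definite critical point must satisfy $\Sigma_2\propto M^{-1}$, the critical point is unique up to scaling and, by the concavity/uniqueness theory for the Kronecker likelihood \cite{Drton2021,Makam2021}, equals the MLE. Conversely, if $n<m_2$ then $\operatorname{rank}(M)\le n<m_2$, so no positive-definite $\Sigma_2$ can be proportional to $M^{-1}$, whence there is no positive-definite critical point and the MLE does not exist. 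The main obstacle I anticipate is the central reduction of the second isotropy condition to $\sum_j\tilde v^{(j)}(\tilde v^{(j)})^{\top}\propto I_{m_2}$: this demands turning the stationarity conditions into the geometric statement that, after whitening, the rows of $\tilde{\mathbf Y}$ are orthonormal and its one-dimensional kernel encodes $\Sigma_2$, and then carefully tracking the block structure of the projection. Confirming that the exhibited pair satisfies \emph{both} flip-flop equations and fixing the scalar normalization consistently are the remaining technical points; genericity is used only to ensure $\operatorname{rank}\mathbf Y=m_1$ and, for $n\ge m_2$, that $M\succ0$ and $\hat\Sigma_1\succ0$.
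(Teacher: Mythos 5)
Your proof is correct and arrives at the paper's formula $\hat K_2=\sum_{j}v^{(j)}(v^{(j)})^{\top}$ by a genuinely different route. The paper first normalizes the data to $[I_{m_1}\,|\,C]$ via the left $\mathrm{GL}(m_1)$ action, proves the general determinant identity of Lemma~\ref{lem:DeterminantReduction} (Cauchy--Binet plus a careful complementary-minor sign computation) reducing $\det(Y(I_n\otimes K)Y^{\top})$ to $\det(K)^n\det(D^{\top}(I_n\otimes K^{-1})D)$, and then, for $k=1$, minimizes the resulting scalar $\tr(\Sigma\tilde D)$ over determinant-one matrices by Lagrange multipliers; non-existence for $n<m_2$ is shown there by exhibiting a sequence along which the objective tends to an unattained infimum. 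You instead work directly with the stationarity (isotropy) conditions, use that $\mathbf Y$ has a one-dimensional kernel exactly when $nm_2=m_1+1$, and observe that after whitening $\tfrac{1}{nm_2}\tilde{\mathbf Y}^{\top}\tilde{\mathbf Y}$ is the orthogonal projector onto the complement of the whitened kernel vector, so that summing diagonal blocks converts the second isotropy condition into $\Sigma_2^{1/2}M\Sigma_2^{1/2}\propto I_{m_2}$ (your trace bookkeeping checks out: the proportionality constant is $\lVert\tilde v\rVert^2/m_2>0$, which also cleanly rules out singular $M$ and hence the case $n<m_2$). Both arguments ultimately hinge on the same object---your kernel vector $v$ is exactly the paper's $D=[C^{\top}\,|\,-1]^{\top}$---but yours bypasses the combinatorial lemma entirely and makes the $k=1$ case geometrically transparent, at the cost of being tailored to a rank-one kernel, whereas the paper's determinant identity holds for all $k$ and is reused in Section~\ref{sec:degree} to set up the higher-degree likelihood equations. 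The two points you assert without detailed proof---that $M\succ0$ for generic data when $n\ge m_2$, and that the unique positive-definite critical point is the global maximizer---are handled the same way in the paper (genericity is asserted, and the second follows from the strict geodesic convexity of the profile objective invoked in Section~\ref{sec:preliminaries}), so they are not gaps.
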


This result is based on invariance properties of the Kronecker covariance model that allow one to simplify the maximum likelihood estimation problem (Section~\ref{sec:simplifying-likelihood}).  However, in contrast to the case from Theorem~\ref{thm:m1=m2+1}, the setting from Theorem~\ref{prop:k1MLEExpression-intro} does not allow for a single canonical form.  

To provide a partial converse to Theorem~\ref{prop:k1MLEExpression-intro}, we explore settings with $m_1+1<nm_2$ in Section~\ref{sec:degree}.  In these settings, and assuming $n$ to be sufficiently large, we are able to construct examples of data sets $(Y_1,\ldots,Y_n)$ for which $\mathrm{MLMult}(Y_1,\ldots,Y_n) > 1$. We conclude the paper with a computational study of the sample-size specific ML degrees  for the case of $m_2=2$, for which Gr\"obner basis computations are tractable.


\section{Preliminaries}
\label{sec:preliminaries}

In this section we present some preliminaries regarding the maximum likelihood estimator in the Kronecker covariance model. We show how the task of maximum likelihood estimation can be simplified by optimizing over one of the Kronecker factors and using the action of the general linear group on the set of positive definite matrices.


\subsection{Maximum likelihood estimation in the Kronecker covariance model}  



The log-likelihood function of observations $Y_1,\ldots,Y_n$ from a mean zero Gaussian distribution $\mathcal{N}(0,\Sigma)$ is equal to 
\begin{equation}
\label{eq:Gaussloglikelihood}
    \ell(K) = n \log \det K - n\tr (S K),
\end{equation}
up to scaling factors and additive constants that do not depend on $K$. In \eqref{eq:Gaussloglikelihood}, $n$ is the sample size, $S = \tfrac{1}{n}\sum_{i=1}^n Y_iY_i^\top$ is the sample covariance matrix, and $K \coloneqq \Sigma^{-1}$ is the concentration matrix. It is often convenient to parameterize the log-likelihood function in terms of $K$ rather than $\Sigma$ as the trace term is linear in the entries of $K$. Specializing the Gaussian log-likelihood to the Kronecker covariance model, let $Y_1, \dots, Y_n$  be an independent and identically distributed sample of matrices in $\mathbb{R}^{m_1 \times m_2}$ from the model $\mathcal{N}(0,\Sigma_2 \otimes \Sigma_1)$. Parameterizing the Kronecker submodel in terms of the concentration matrices $K_i := \Sigma_i^{-1}$, $i = 1,2$
the log-likelihood function \eqref{eq:Gaussloglikelihood} takes the form
\begin{equation}
\label{eq:Kroneckerloglikelihood}
        \ell(K_2 \otimes K_1) = n m_2 \log \det(K_1) + n m_1 \log \det(K_2) - \tr\left(\sum_{i=1}^nK_1Y_iK_2Y_i^{\top}\right).
\end{equation}
The Kronecker product map $(K_2,K_1) \mapsto K_2 \otimes K_1$ on the product space $\PD(m_2) \times \PD(m_1)$ is a surjective map onto the set of Kronecker covariance matrices. Each of the Kronecker factors $K_2,K_1$ are uniquely determined up to scale since $K_2 \otimes K_1 = cK_2 \otimes c^{-1}K_1$ for all $c > 0$. 


For a fixed $K_2 \in \PD(m_2)$, as a function of $K_1$ the log-likelihood $\ell(K_2 \otimes K_1)$ has the same form as the Gaussian log-likelihood \eqref{eq:Gaussloglikelihood}. The maximizer of \eqref{eq:Kroneckerloglikelihood} given $K_2$ is therefore
\begin{align}
\label{eq:ProfileMaximizer}
    \hat{K}_1(K_2) \coloneqq \underset{K_1 \in \PD(m_1)}{\argmax} \ell(K_2 \otimes K_1) =  \left(\frac{1}{nm_2}\sum_{i=1}^n Y_iK_2Y_i^{\top}\right)^{-1},
\end{align}
as long as the matrix inverse on the right-hand side exists. A necessary and sufficient condition for this inverse to exist with probability one for all $K_2 \in \PD(m_2)$  is $nm_2 \geq m_1$ \cite[Lemma 2.3]{Drton2021}. The popular flip-flop algorithm that numerically finds the Kronecker MLE is based on applying \eqref{eq:ProfileMaximizer}, along with an analogous update for $K_2$, in an iterative block-coordinate ascent procedure \cite{Dutilleul19}.

The profile likelihood function \cite{Severini} is defined as
\begin{align}
\label{eqn:ProfLikelihood}
    \ell\left(K_2 \otimes \hat{K}_1(K_2)\right) = -nm_2\log\det\left(\frac{1}{nm_2}\sum_{i=1}^n Y_iK_2Y_i^{\top}\right) + nm_1\log\det(K_2) - nm_1m_2.
\end{align}
To simplify notation, we omit additive and multiplicative constants and define
\begin{align}
\label{eqn:gDefinition}
    g(K) \coloneqq m_2\log\det\left(\sum_{i=1}^n Y_iKY_i^{\top}\right) - m_1\log\det(K  ).
\end{align}
The function $g$ is constant along any ray in $\PD(m_2)$, with $g(cK) = g(K)$ for all $c > 0$. Consequently, $g$ can viewed as a function on the subset of projective space $\mathbb{P}(\PD(m_2))$ of equivalence classes of positive definite matrices, where two matrices are equivalent if they are scalar multiples of each other.   

To find the Kronecker MLE, first $g$ can be minimized over the projective space to obtain the equivalence class 
\begin{align}
\label{eq:SimplifiedOptimization}
    [\hat{K}_2] = \underset{[K_2] \in \mathbb{P}(\PD(m_2))}{\argmin} \; g([K_2]).
\end{align}
For any representative $\hat{K}_2$ of this equivalence class, using \eqref{eq:ProfileMaximizer}, the MLE is equal to $\hat{K}_2 \otimes \hat{K}_1(\hat{K}_2)$. This expression for the MLE is independent of the choice of the representative $\hat{K}_2$ since $c\hat{K}_2 \otimes \hat{K}_1(c\hat{K}_2) = c\hat{K}_2 \otimes c^{-1}\hat{K}_1(\hat{K}_2)$. Explicit choices of representatives from each equivalence class, such as matrices $K$ with $\det(K) = 1$, can be used to parameterize the space $\mathbb{P}(\PD(m_2))$. However, it will be convenient when studying the likelihood equations associated with \eqref{eq:SimplifiedOptimization} to not impose such a parameterization.    

 Maximizing the profile likelihood is the same as sequentially maximizing the likelihood, implying that solving the optimization problem in \eqref{eq:SimplifiedOptimization} is necessary and sufficient to find the Kronecker MLE when it exists:
\begin{lemma}[Lemma 2.4 in \cite{Drton2021}]
\label{lem:ProfLikEquivalence}
When $nm_2 \geq m_1 $ the Kronecker MLE exists and is unique if and only if the profile likelihood function \eqref{eqn:ProfLikelihood} is uniquely maximized by $\hat{K}_2$  over $\mathbb{P}(\PD(m_2))$. The MLE is given by $\hat{K}_2 \otimes \hat{K}_1(\hat{K}_2)$.
\end{lemma}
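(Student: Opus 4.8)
The plan is to carry out the standard profile-likelihood argument, with the one twist being the scaling invariance of the Kronecker parameterization. Throughout I treat the Kronecker MLE as the maximizer of the concentration matrix $K_2 \otimes K_1$, which is the genuinely identifiable object, rather than the individual factors that are only pinned down up to the reciprocal scaling $K_2 \otimes K_1 = (cK_2)\otimes(c^{-1}K_1)$ noted after \eqref{eq:Kroneckerloglikelihood}. Accordingly, ``exists and is unique'' is interpreted as: the supremum of $\ell$ over $\PD(m_2)\times\PD(m_1)$ is attained, and the value of the product $K_2\otimes K_1$ at any maximizer is the same.

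First I would establish the two-stage maximization identity
\[
\sup_{(K_2,K_1)}\ell(K_2\otimes K_1)\;=\;\sup_{K_2\in\PD(m_2)}\ell\big(K_2\otimes\hat{K}_1(K_2)\big),
\]
which follows once one knows that for each fixed $K_2$ the inner supremum over $K_1$ is attained uniquely at $\hat{K}_1(K_2)$ from \eqref{eq:ProfileMaximizer}. This uniqueness is exactly where the hypothesis $nm_2\ge m_1$ enters: by \cite[Lemma 2.3]{Drton2021} it guarantees that $\sum_{i=1}^n Y_iK_2Y_i^\top$ is almost surely positive definite for every $K_2\in\PD(m_2)$, so the inverse in \eqref{eq:ProfileMaximizer} exists and the inner problem, being an ordinary Gaussian likelihood \eqref{eq:Gaussloglikelihood} in $K_1$, is strictly concave with this single maximizer.

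Next I would invoke the scale invariance recorded after \eqref{eqn:gDefinition}: because the profile objective \eqref{eqn:ProfLikelihood} depends on $K_2$ only through its class $[K_2]$, the right-hand supremum coincides with the supremum over $\mathbb{P}(\PD(m_2))$ appearing in \eqref{eq:SimplifiedOptimization}, so existence transfers immediately. For the uniqueness equivalence I would chain, for an arbitrary pair $(K_2,K_1)$,
\[
\ell(K_2\otimes K_1)\;\le\;\ell\big(K_2\otimes\hat{K}_1(K_2)\big)\;\le\;\ell\big(\hat{K}_2\otimes\hat{K}_1(\hat{K}_2)\big),
\]
where $[\hat{K}_2]$ maximizes the profile. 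The first inequality is strict unless $K_1=\hat{K}_1(K_2)$, and the second is strict unless $[K_2]=[\hat{K}_2]$. Hence the equality cases are precisely $K_2=c\hat{K}_2$, $K_1=c^{-1}\hat{K}_1(\hat{K}_2)$, all of which give the single concentration matrix $\hat{K}_2\otimes\hat{K}_1(\hat{K}_2)$ via the reciprocal-scaling identity $\hat{K}_1(c\hat{K}_2)=c^{-1}\hat{K}_1(\hat{K}_2)$ established in the discussion of \eqref{eq:SimplifiedOptimization}. The converse reverses the chain: a unique global maximizer of $\ell$, projected onto the $K_2$-factor, must be a unique profile maximizer.

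The main obstacle I anticipate is not the analysis but the bookkeeping of the scaling quotient: one must consistently distinguish uniqueness of the product $K_2\otimes K_1$ from uniqueness of the factor pair $(K_2,K_1)$, and verify that the one-parameter family of equality cases collapses to a single Kronecker product under reciprocal scaling so that the two notions of uniqueness match up correctly.
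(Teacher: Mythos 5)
Your proposal is correct: it is the standard sequential-maximization (profile likelihood) argument, which is exactly what the paper alludes to in the sentence preceding the lemma (``Maximizing the profile likelihood is the same as sequentially maximizing the likelihood\dots''); the paper itself supplies no proof, importing the statement from \cite{Drton2021}. Your handling of the two genuine subtleties --- that $nm_2\ge m_1$ is needed (via the cited Lemma 2.3) for the inner maximizer $\hat{K}_1(K_2)$ to be well defined for every $K_2$, and that the one-parameter equality family $(cK_2,\,c^{-1}K_1)$ collapses to a single Kronecker product because the factors are identifiable up to reciprocal scaling --- is accurate, so nothing further is needed.
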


Due to this lemma we focus our attention on the simplified problem of minimizing $g$ by solving its associated score equations. 

As expounded in \cite{Drton2021} and \cite{Makam2021}, there are subtle issues regarding the existence and uniqueness of the Kronecker MLE. The condition $nm_2 \geq m_1$, while sufficient for \eqref{eq:ProfileMaximizer} to be well-defined almost surely, is not a strong enough condition to ensure that the MLE exists and is unique. If $N_e(m_1,m_2)$ and $N_u(m_1,m_2)$ are the respective thresholds on the minimum sample size $n$ needed for the existence and uniqueness of the MLE, then it holds that
\begin{align}
\label{eq:SampleSizeReq}
    \max \bigg\{\frac{m_1}{m_2},\frac{m_2}{m_1} \bigg\}
    \leq N_e(m_1,m_2)
    \leq N_u(m_1,m_2)
    \leq \bigg\lfloor \frac{m_1}{m_2} + \frac{m_2}{m_1} \bigg\rfloor +1 .
\end{align}
The different cases considered in Theorem 1.2 in \cite{Makam2021} give the precise values of the thresholds.
It will be assumed throughout the remainder of this article that the sample size is large enough so that the Kronecker MLE exists and is unique. The function $g$ in \eqref{eqn:gDefinition} has the property of being geodesically convex along the affine-invariant geodesics in the positive definite cone \cite{Wiesel2012}. We note that under the assumption that the sample size is large enough for the existence and uniqueness of the MLE, $g$ is strictly geodesically convex, and there is only a single critical point of the likelihood equations that lies in $\mathbb{P}(\PD(m_2))$.    


\subsection{Group action}
\label{subsec:groupaction}
The matrix normal family is a group transformation family, where if $Y \sim \mathcal{N}(0,\Sigma_2 \otimes \Sigma_1)$ then $AYB^\top \sim \mathcal{N}(0, B\Sigma_2B^\top \otimes A\Sigma_1A^\top)$ for non-singular matrices $A,B$. This group action can be leveraged to simplify the form of the profile likelihood function. Consider the action of the general linear group $\text{GL}(m_1,\mathbb{R})$ on left on the data matrices $Y_i \in \mathbb{R}^{m_1 \times m_2}$ given by 
\[
Y_i \mapsto AY_i,\; i = 1,\dots ,n.
\]
Evaluating $g$ at these transformed matrices gives
\begin{align}
\label{eq:gtransformed}
    m_2 \log \det \left(\sum_{i=1}^{n} AY_iK Y_i^\top A^{\top}\right) - m_1 \log\det(K) = g(K) + 2\log\det(A). 
\end{align}
As the function \eqref{eq:gtransformed} differs from $g$ by an additive constant that does not depend on $K$, minimizing $g$ is equivalent to minimizing \eqref{eq:gtransformed} for any $A 
\in \text{GL}(m_1,\mathbb{R})$. 

Concatenating the columns of the matrices $Y_1,\dots,Y_n$ produces a matrix $Y = [Y_1|Y_2|\dots|Y_n]$ of size $m_1 \times nm_2$. The left group action described above acts on $Y$ by left multiplication:
\begin{align*}
    [AY_1| AY_2| \dots | AY_n] = AY.
\end{align*}
For generic data, the left-most $m_1 \times m_1$ block of $Y$, say $Y_*$, is in $\text{GL}(m_1,\mathbb{R})$. Taking $A = Y_*^{-1}$, $AY$ has the form
\begin{equation}
\label{eq:groupaction} 
    [I_{m_2} | C] =    \begin{bmatrix}
            1 & 0 & 0 & \ldots & 0 & c_{1,1} & \ldots & c_{1,nm_2-m_1}\\
            0 & 1 & 0 & \ldots & 0 & c_{2,1} & \ldots & c_{2,nm_2-m_1}\\
            0 & 0 & 1 & \ldots & 0 & c_{3,1} & \ldots & c_{3,nm_2-m_1}\\
            \vdots & \vdots & \vdots & \ddots & \vdots & \vdots & \ddots & \vdots\\
            0 & 0 & 0 & \ldots & 1 & c_{m_1,1} & \ldots & c_{m_1,nm_2-m_1}&
            \end{bmatrix}.
\end{equation} 
 for some $m_1 \times (nm_2 - m_1)$ dimensional matrix $C$. We denote the number of columns of $C$ by $k:=nm_2 - m_1$ throughout the remainder of this work. Without loss of generality, \eqref{eq:gtransformed} implies that it can be assumed that $Y$ has the simplified form  \eqref{eq:groupaction} when minimizing $g$. 
 
\section{A symbolic solution for the Kronecker MLE}
\label{sec:simplifying-likelihood}

In this section it will be demonstrated how $g$ can be further simplified when $Y$ is in the form \eqref{eq:groupaction}. We will show that when $k = 1$, the optimization problem \eqref{eq:SimplifiedOptimization} can be solved explicitly, and the Kronecker MLE has a closed-form expression. The key lemma that simplifies the function $g$ defined in \eqref{eqn:gDefinition} is:

\begin{lemma}
\label{lem:DeterminantReduction}
Let $nm_2 = m_1 + k$ for $k > 0$. If there exists a matrix $C \in \mathbb{R}^{m_1 \times k}$ such that $Y = [Y_1 | \ldots | Y_n] = [I_{m_1} | \;C] \in \mathbb{R}^{m_1 \times n m_2}$ and we define $D^\top = [C^\top | -I_k] \in \mathbb{R}^{k \times nm_2}$ then
\begin{align}
   \label{eqn:KeyLemmaDet} \det\left(\sum_{i=1}^{n}Y_iKY_i^\top\right) = \det(Y(I_n\otimes K)Y^\top) = \det(K)^n \det\left(D^\top (I_n \otimes K^{-1}) D \right).
\end{align}
\end{lemma}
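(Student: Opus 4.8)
The plan is to peel off the trivial first equality and then establish the second as a determinant identity that holds for an \emph{arbitrary} invertible matrix in place of $I_n\otimes K$, specializing only at the very end. Writing $M \coloneqq I_n\otimes K$ for the block-diagonal matrix carrying $n$ copies of $K$, the product $Y M Y^\top$ with $Y=[Y_1\mid\cdots\mid Y_n]$ expands blockwise to $\sum_{i=1}^n Y_i K Y_i^\top$, so the first equality is pure unpacking and needs no hypotheses. It then remains to prove
\[
\det(Y M Y^\top) = \det(M)\,\det\!\big(D^\top M^{-1} D\big),
\]
and I would do so for every invertible $M\in\mathbb{R}^{N\times N}$ with $N=nm_2$, recovering the stated formula at the end via $\det(I_n\otimes K)=\det(K)^n$ and $(I_n\otimes K)^{-1}=I_n\otimes K^{-1}$; the Kronecker structure is irrelevant to the core argument.

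The engine is the change of basis $W\coloneqq[\,Y^\top\mid D\,]\in\mathbb{R}^{N\times N}$, designed so that its two column blocks are orthogonal: from $YD=[I_{m_1}\mid C]\binom{C}{-I_k}=0$ the columns of $D$ span $\ker Y=(\operatorname{row}Y)^\perp$, whence $W^\top W=\operatorname{diag}(YY^\top,\,D^\top D)$ is block diagonal. This immediately gives $\det(W)^2=\det(YY^\top)\det(D^\top D)$ and the explicit bottom block $(D^\top D)^{-1}D^\top$ of $W^{-1}$. I would then read $\det(YMY^\top)$ off the congruent matrix $W^\top M W$, whose top-left block is exactly $YMY^\top$. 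Using the standard Schur-complement fact that the determinant of a diagonal block equals $\det(W^\top M W)$ times the determinant of the complementary block of $(W^\top M W)^{-1}=W^{-1}M^{-1}W^{-\top}$, and identifying that complementary block as $(D^\top D)^{-1}(D^\top M^{-1}D)(D^\top D)^{-1}$, a short computation yields
\[
\det(YMY^\top)=\det(M)\,\det\!\big(D^\top M^{-1}D\big)\,\frac{\det(YY^\top)}{\det(D^\top D)}.
\]

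The remaining and genuinely decisive step is to show the correction factor is $1$, and here the precise shapes of $Y$ and $D$ are essential: $YY^\top=I_{m_1}+CC^\top$ while $D^\top D=I_k+C^\top C$, and Sylvester's determinant identity gives $\det(I_{m_1}+CC^\top)=\det(I_k+C^\top C)$, so $\det(YY^\top)=\det(D^\top D)$ and the claimed identity follows. I expect this matching to be the crux of the argument: the block-inverse manipulation is routine, but the identity is simply false for generic complementary $Y$ and $D$, and it is exactly the normalization forced by the canonical form $[I_{m_1}\mid C]$---through Sylvester's identity---that cancels the volume-distortion factor $\det(YY^\top)/\det(D^\top D)$. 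Finally, positive definiteness of $M=I_n\otimes K$ makes $YMY^\top$ and $D^\top M^{-1}D$ positive definite, so every inverse and Schur complement invoked above is well defined and no genericity assumption on $C$ is required.
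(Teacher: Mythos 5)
Your proof is correct, and it takes a genuinely different route from the paper's. The paper proves the identity by applying the Cauchy--Binet formula to both $\det(Y(I_n\otimes K)Y^\top)$ and $\det(D^\top(I_n\otimes K^{-1})D)$, reducing each to a sum over block-compatible index sets, invoking the complementary-minor identity $\det(K_{F,G})=\det(K)(-1)^{s(F)+s(G)}\det(K^{-1}_{F^c,G^c})$, and then carrying out a delicate sign computation with wedge products to show that the maximal minors of $[I_{m_1}\mid C]$ and of $[C^\top\mid -I_k]^\top$ agree up to the predicted signs. Your argument replaces all of that combinatorics with the congruence $W^\top M W$ for $W=[Y^\top\mid D]$, the Jacobi/Schur-complement identity $\det(A_{II})=\det(A)\det((A^{-1})_{JJ})$, and a single application of Sylvester's determinant identity to cancel the factor $\det(YY^\top)/\det(D^\top D)$; each step checks out ($YD=0$ gives the block-diagonal Gram matrix, $W^{-1}=(W^\top W)^{-1}W^\top$ gives the bottom block $(D^\top D)^{-1}D^\top$, and positive definiteness of $M$ guarantees invertibility throughout). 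Your version is shorter, works verbatim for any invertible $M$ in place of $I_n\otimes K$, and isolates precisely where the canonical forms of $Y$ and $D$ matter -- namely only in making the volume-distortion factor equal to one. What the paper's term-by-term approach buys in exchange is an explicit correspondence between the individual Cauchy--Binet summands of the two determinants (the dual Pl\"ucker coordinates $A_J$ and $B_F$), which is more information than the determinant identity itself, though that extra information is not used elsewhere in the paper. One small presentational point: you should state explicitly that $W$ is invertible (the columns of $Y^\top$ span an $m_1$-dimensional space, the columns of $D$ span the $k$-dimensional kernel of $Y$, and these are orthogonal), since the Schur-complement step is applied to $W^\top MW$.
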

\begin{proof}
Applying the Cauchy-Binet formula twice, 
we obtain
\begin{align}
    \label{eq:CauchyBinetonYKY}
    \det\left(\sum_{i=1}^{n}Y_iKY_i^\top \right) &= \det(Y(I_n\otimes K)Y^\top) \\
    &= 
    \sum_{J_1,J_2}
    \det(Y_{[m_1],J_1})\det((I_n\otimes K)_{J_1, J_2})\det(Y^\top_{J_2,[m_2]})
    \\
    \label{eqn:CBpolynomial}
    & = \sum_{J_1, J_2} A_{J_1}A_{J_2} Z_{J_1,J_2},
\end{align}
where the summation is over subsets $J_1,J_2 \subset [nm_2]$ with $|J_1|=|J_2|=m_1$, and $A_{J_1}  \coloneqq \det(Y_{[m_1],J_1}) = \det(Y^\top_{J_1,[m_2]})$, $Z_{J_1,J_2} \coloneqq \det((I_n\otimes K)_{J_1, J_2})$. A similar application of Cauchy-Binet establishes that
\begin{align}
\det\left(D^\top(I_n \otimes K^{-1})D\right) &  = \sum_{F_1,F_2}
    \det(D^\top_{[k],F_1})\det((I_n\otimes K^{-1})_{F_1, F_2})\det(D_{F_2,[k]})
    \\
    \label{eqn:CBpolynomial2}
    & = \sum_{F_1, F_2} B_{F_1}B_{F_2} W_{F_1,F_2},
\end{align}
with $B_{F} \coloneqq \det(D_{F,[k]})$, and $W_{F_1,F_2} = \det((I_n \otimes K^{-1})_{F_1,F_2})$.

We first establish the form of $\det((I_n \otimes M)_{F,G})$ for an $m_2 \times m_2$ matrix $M$, and index sets $F,G \subset [nm_2]$ with $\vert F \vert = \vert G \vert$. For $a,b\in \mathbb{Z}$, we use the notation $[a:b]$ to represent the set $[a,b] \cap \mathbb{Z}$. Let $F_i = F \cap [m_2(i - 1) + 1:m_2i] \mod(m_2 + 1)$ and similarly $G_i = G \cap [m_2(i - 1) + 1:m_2i] - m_2(i-1)$ for $i \in  \{1,\ldots,n\}$. That is, $F_i$ is the intersection of the index set $F$ with the $i$th block of size $m_2$ in $[nm_2]$, where the indices in $F_i$ are translated to take values in $[m_2]$. The quantity $\det((I_n \otimes A)_{F,G})$ is zero unless $\vert F_i \vert = \vert G_i \vert$ for all $i$. To see this, suppose without loss of generality that $\vert F_1 \vert > \vert G_1 \vert$. The first $\vert F_1 \vert$ rows of the matrix $(I_n \otimes A)_{F,G}$ have at most $\vert G_1 \vert$ columns with non-zero entries. It follows that $(I_n \otimes A)_{F,G}$ does not have full-rank, proving the assertion. Next, assume that $\vert F_i \vert = \vert G_i \vert$ for all $i$. In this case, $(I_n \otimes A)_{F,G} = \text{diag}( A_{F_1,G_1},\ldots,A_{F_n,G_n}) $ is block-diagonal with determinant $\prod_{i = 1}^n \det(A_{F_i,G_i})$. Defining $S_j$ to be the set of index pairs $(F,G)$ contained in $[nm_2]$ of size $\vert F \vert = \vert G \vert = j$ with $\vert F_i \vert = \vert G_i \vert$ for all $i$, the sums \eqref{eqn:CBpolynomial} and \eqref{eqn:CBpolynomial2} reduce to 
\begin{align}
\label{eqn:CB1}
    \sum_{J_1, J_2} A_{J_1}A_{J_2} Z_{J_1,J_2} & = \sum_{(J_1,J_2) \in S_{m_1} } A_{J_1}A_{J_2} \prod_{i = 1}^n \det(K_{J_{1i},J_{2i}}),
    \\
    \label{eqn:CB2}
     \sum_{F_1, F_2} B_{F_1}B_{F_2} W_{F_1,F_2} & = \sum_{(F_1,F_2) \in S_k} B_{F_1}B_{F_2} \prod_{i = 1}^n \det(K^{-1}_{F_{1i},F_{2i}}).
\end{align}
The result
\begin{align}
\label{eqn:DetKKInverse}
    \det(K_{F,G}) = \det(K) (-1)^{s(F) + 
 s(G)} \det(K^{-1}_{F^c,G^c}),
\end{align}
where $s(F) = \sum_{i \in F} i$, expresses the determinant of a minor of $K$ in terms of a determinant of a complementary minor of $K^{-1}$. Note that \eqref{eqn:DetKKInverse} holds for empty index sets $F,G$ ($F^c,G^c$ respectively) if $\det(K_{F,G})$ ($\det(K^{-1}_{F^c,G^c})$) is defined to be $1$. Applying this formula to \eqref{eqn:CB1} gives
\begin{align*}
      \sum_{J_1, J_2} A_{J_1}A_{J_2} Z_{J_1,J_2} & = \det(K)^n \sum_{(J_1,J_2) \in S_{m_1}} A_{J_1}A_{J_2} \prod_{i = 1}^n (-1)^{s(J_{1i}) + s(J_{2i})} \det(K^{-1}_{J_{1i}^c,J_{i2}^c})
      \\
      & =  \det(K)^n \sum_{(J_1,J_2) \in S_{m_1}} (-1)^{ s(J_1) + s(J_{2})} A_{J_1}A_{J_2} \prod_{i = 1}^n \det(K^{-1}_{J_{1i}^c,J_{i2}^c})
      \\
      & = \det(K)^n \sum_{(F_1,F_2) \in S_k} (-1)^{ s(F_{1}^c) + s(F_{2}^c)}A_{F_1^c}A_{F_2^c} \prod_{i = 1}^n \det(K^{-1}_{F_{1i},F_{i2}}).
\end{align*}
The second equality above follows from the fact that any index pair $j_{1a} \in J_{1i}$ and $j_{2b} \in J_{2i}$ corresponds to the index pairs $j_{1a} + (i-1)m_2$ and $j_{2b} + (i-1)m_2$ in $J_1$ and $J_2$, where $ (-1)^{j_{1a} + (i-1)m_2 + j_{2b} + (i-1)m_2} = (-1)^{j_{1a} + j_{2b}}$. As there are the same number of indices in $J_{1i}$ and $J_{2i}$ for all $i$, it is always possible to find such a pairing. 

The proof of the equality of \eqref{eqn:CBpolynomial} and \eqref{eqn:CBpolynomial2} will be completed if it is shown that $B_{F_1}B_{F_1} = (-1)^{s(J_1) + s(J_2)}A_{J_1}A_{J_2}$. Fix an $F \subset [nm_2]$ and take $J \coloneqq [nm_2]\backslash F$. We partition $F$ into two sets, $F' = F \cap [m_1]$ and $F'' = F \cap [m_1+1:nm_2] - m_1$ so that 
\begin{align*}
    B_F = \det\left( \begin{bmatrix}
    C_{F',[k]}
    \\
    -I_{F'',[k]}
    \end{bmatrix}
    \right).
\end{align*}
Partitioning $J$ analogously with $J' = J \cap [m_1]$, $J'' = J \cap [m_1 + 1:nm_2] - m_1$, we have
\begin{align*}
    A_J = \det\left( \begin{bmatrix}
        I_{[m_1],J'} \; C_{[m_1],J''}
    \end{bmatrix} \right).
\end{align*}
There is a correspondence between $J',J'',F',F''$ as $F' = [m_1]\backslash J'$, and $F'' = [k]\backslash J''$. Moreover, the equations
\begin{align*}
    \vert J' \vert + \vert F' \vert &= m_1,
    &
      \vert J'' \vert + \vert F'' \vert & = k,
      \\
        \vert J' \vert + \vert J'' \vert & = m_1,
        &
          \vert F' \vert + \vert F'' \vert & = k
\end{align*}
imply that $\vert F' \vert = \vert J''\vert$ and $\vert J' \vert = \vert F''\vert + m_1 - k$.

To compute the needed determinants we use wedge products. Let $J' = \{j_1',\ldots,j_\alpha'\}$, $J'' = \{j_1'',\ldots,j_\beta''\}$, $F' = \{f_1',\ldots,f_{\tau}'\}$, and $F'' = \{f_1'',\ldots,f_{\omega}''\}$, where the indices are ordered monotonically with $j_a' < j_b'$, $j_a'' < j_b''$, $f_a' < f_b'$, $f_a'' < f_b''$ for $a < b$. Take $c_{\cdot j_1''},\ldots c_{\cdot j_\alpha''}$, and $c_{f_1'\cdot}, \ldots, c_{f_\tau' \cdot }$ to be columns and rows of $C_{[m_1],J''}$ and $C_{F',[k]}$ respectively. Starting with $A_J$, if $e_{j_i'}$ are the standard basis vectors of $\mathbb{R}^{m_1}$, the coefficient of of $e_1 \wedge \cdots \wedge e_{m_1}$ that appears in 
\begin{align*}
    e_{j_1'} \wedge \cdots \wedge e_{ j_\alpha'} \wedge c_{\cdot j_1''} \wedge \cdots \wedge c_{\cdot j_\beta''} & = (-1)^{\alpha\beta}  c_{\cdot j_1''} \wedge \cdots \wedge c_{\cdot j_\beta''} \wedge   e_{j_1'} \wedge \cdots \wedge e_{j_\alpha'} 
    \\
    & =  (-1)^{\alpha\beta + \tfrac{1}{2}\beta(\beta-1)}  c_{\cdot j_\beta''} \wedge \cdots \wedge c_{\cdot j_1''} \wedge   e_{j_1'} \wedge \cdots \wedge e_{j_\alpha'} 
\end{align*}
is equal to $A_J$. We permute the vectors $c_{\cdot j_i''}$ in the wedge product so that $c_{\cdot j_1''}$ takes the position of the standard basis with the smallest index that is missing from $ e_{j_1'} \wedge \cdots \wedge e_{j_\alpha'}$, and continue this process with $c_{\cdot j_{2}''}$ replacing the basis vector with the second largest index, and so on. The sign of this permutation is $\sum_{x \in F'}(x - 1) = s(F') - \tau$, giving
\begin{align*}
  (-1)^{\alpha\beta + \tfrac{1}{2}\beta(\beta-1)}  c_{\cdot j_\beta''} \wedge \cdots \wedge c_{\cdot j_1''} \wedge   e_{j_1'} \wedge \cdots \wedge e_{j_\alpha'}   = (-1)^{\alpha\beta + \tfrac{1}{2}\beta(\beta-1) +  s(F') - \tau} \det(C_{F',J''}) \bigwedge_{i = 1}^{m_1} e_i. 
\end{align*}

The coefficient of $e_1 \wedge \cdots \wedge e_k$ that appears in 
\begin{align*}
 c_{f'_1 \cdot} \wedge \cdots \wedge c_{f'_\tau \cdot } \wedge (-e_{f''_1}) \wedge \cdots \wedge (-e_{f_\omega''}) & =    (-1)^{\omega} c_{f'_1 \cdot } \wedge \cdots \wedge c_{f'_\tau \cdot } \wedge e_{f''_1} \wedge \cdots \wedge e_{f_\omega''}
 \\
 & =  (-1)^{\omega + \tfrac{1}{2}\tau(\tau-1)} c_{f'_\tau \cdot } \wedge \cdots \wedge c_{f'_1 \cdot } \wedge e_{f''_1} \wedge \cdots \wedge e_{f_\omega''}
\end{align*}
is equal to $B_F$. We permute the row vectors $c_{f_i'}$ into the positions of $ e_{f''_1} \wedge \cdots \wedge e_{f_\omega''}$ so that any missing standard basis vectors are filled in by the $c_{f_i'\cdot}$s and the row ordering is maintained. The sign of this permutation is $\sum_{x \in J''}(x-1 - m_1) = s(J'') - \beta - m_1\beta$, giving
\begin{align*}
    (-1)^{\omega + \tfrac{1}{2}\tau(\tau-1)} c_{f'_\tau \cdot } \wedge \cdots \wedge c_{f'_1 \cdot } \wedge e_{f''_1} \wedge \cdots e_{f_\omega''} =   (-1)^{\omega + \tfrac{1}{2}\tau(\tau-1) + s(J'') - \beta - m_1\beta} \det(C_{F',J''})  \bigwedge_{i = 1}^k e_i.
\end{align*}
This shows that $A_J$ and $B_F$ both have the same factors that involve determinants of submatrices of $C$. 

It remains to show that the exponents of $-1$ in $(-1)^{s(J_1) + s(J_2)}A_{J_1}A_{J_2} $ and $B_{F_1}B_{F_2}$ agree. We have that $\tau = \beta$, $\alpha = m_1 - \beta$, and $\omega = k -\beta$. By the computations above, the exponent of $-1$ in $A_J^{-1}B_F$ is 
\begin{align*}
    -\alpha\beta & - \tfrac{1}{2}\beta(\beta-1) - s(F') + \tau + \omega + \tfrac{1}{2}\tau(\tau-1) + s(J'') - \beta - m_1\beta
    \\ 
    & =  (m_1 - \beta)\beta  + \tfrac{1}{2}\beta(\beta-1) + s(F') + \beta + k - \beta + \tfrac{1}{2}\beta(\beta-1) + s(J'') + \beta + m_1\beta \mod 2
    \\
    & =  s(F') + k + s(J'') \mod 2.
\end{align*}
Note that $s(F') + s(J') = s([m_1]) = \tfrac{1}{2}m_1(m_1+1)$, which simplifies the above expression to
\begin{align*}
    s(F') + k + s(J'')  & = k + s(J') + s(J'') + \tfrac{1}{2}m_1(m_1 + 1) \mod 2 
    \\
    & = s(J) + k + \tfrac{1}{2}m_1(m_1 + 1) \mod 2.
\end{align*}
Here $(J,F)$ can be taken to be either $(J_1,F_1)$ or $(J_2,F_2)$. Putting these two cases together gives
\begin{align*}
    (A_{J_1}A_{J_2})^{-1}B_{F_1}B_{F_2} & = (-1)^{s(J_1) + k + \tfrac{1}{2}m_1(m_1+1) + s(J_2) + k + \tfrac{1}{2}m_1(m_1+1)}
    \\
    & = (-1)^{s(J_1) + s(J_2)},
\end{align*}
completing the proof.
\end{proof}

Lemma \ref{lem:DeterminantReduction} replaces the determinant of the $m_1 \times m_1$ matrix  $Y(I_n\otimes K)Y^\top$ with the determinant of the $k \times k$ matrix $D^\top (I_n \otimes K^{-1}) D \coloneqq D^\top (I_n \otimes \Sigma)D$.  When $k$ is small the latter determinant can be easier to work with, as is shown in the following example.

\begin{example}
We illustrate Lemma \ref{lem:DeterminantReduction} by taking $m_1 = 4$, $m_2 = 2$, and $n = 3$ so that $k = 3 \cdot 2 - 4 = 2$. If 
\begin{align*}
    K =\begin{bmatrix}
        3 & 1 
        \\
        1 & 3 
    \end{bmatrix} \text{and } \; Y = \begin{bmatrix}
        1 & 0 & 0 & 0 & 1 & 2 
        \\
        0 & 1 & 0 & 0 & 3 & 4
        \\
        0 & 0 & 1 & 0 & 5 & 6
        \\
        0 & 0 & 0 & 1 & 7 & 8
    \end{bmatrix}
\end{align*}
then Lemma \ref{lem:DeterminantReduction} shows that
\begin{align*}
    \det\big( Y^\top (I_n \otimes K)Y \big) & = \det\begin{pmatrix}
        22 & 44 & 67 & 91
        \\
        44 & 102 & 155 & 211
        \\
        67 & 155 & 246 & 332
        \\
        91 & 211 & 332 & 454
    \end{pmatrix} = 16640 
    \\
    & = \det\begin{pmatrix}
        3 & 1
        \\
        1 & 3
    \end{pmatrix}^{3}
    \det\begin{pmatrix}
        22.375 & 25.875
        \\
        25.875 & 31.375
    \end{pmatrix} = \det(K)^n \det\left(D^\top (I_n \otimes K^{-1}) D \right).
\end{align*} 
\end{example}

To summarize our results up to this point, the Kronecker maximum likelihood estimation problem has been reformulated as follows.

\begin{proposition}
\label{prop:resultsummary}
   When it exists, the Kronecker MLE is equal to $\hat{K}_2 \otimes \hat{K}_1(\hat{K}_2)$ where $\hat{K}_1(\hat{K}_2)$ is given by \eqref{eq:ProfileMaximizer} and 
   \begin{align}
  \label{eqn:KSimpleEquation}
       [\hat{K}_2] = \underset{K \in \mathbb{P}(\PD(m_2))}{\argmin} \; m_2\log\det\left(D^\top (I_n \otimes K^{-1})D \right) + k\log\det(K).
   \end{align}
\end{proposition}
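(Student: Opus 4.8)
The plan is to assemble the proposition from the three ingredients already in hand: the profile-likelihood reduction of Lemma~\ref{lem:ProfLikEquivalence}, the group-action invariance~\eqref{eq:gtransformed}, and the determinant identity of Lemma~\ref{lem:DeterminantReduction}. Under the standing assumption $nm_2 \geq m_1$, Lemma~\ref{lem:ProfLikEquivalence} tells us that the Kronecker MLE, when it exists, equals $\hat{K}_2 \otimes \hat{K}_1(\hat{K}_2)$ for any representative $\hat{K}_2$ of the class maximizing the profile likelihood~\eqref{eqn:ProfLikelihood} over $\mathbb{P}(\PD(m_2))$. Since~\eqref{eqn:ProfLikelihood} differs from $-n\,g(K_2)$ only by an additive constant (expand $\det(\tfrac{1}{nm_2}\,\cdot\,)$ and collect the constant terms), maximizing it over $\mathbb{P}(\PD(m_2))$ is the same as minimizing $g$, so it suffices to show that minimizing $g$ is equivalent to minimizing the objective in~\eqref{eqn:KSimpleEquation}.

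First I would use the group action to pass to canonical data. For generic $Y$ the leading $m_1\times m_1$ block $Y_*$ is invertible, and taking $A = Y_*^{-1}$ brings $Y$ into the form $[I_{m_1}\mid C]$ of~\eqref{eq:groupaction}. By~\eqref{eq:gtransformed} this replacement changes $g$ by the additive constant $2\log\det(A)$, which leaves the minimizer untouched; hence the class $[\hat{K}_2]$ of~\eqref{eq:SimplifiedOptimization} is unchanged and I may assume $Y = [I_{m_1}\mid C]$ throughout the minimization, while the companion factor $\hat{K}_1(\hat{K}_2)$ in the final formula is still formed via~\eqref{eq:ProfileMaximizer}.

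Next I would apply Lemma~\ref{lem:DeterminantReduction} to the canonical data. With $D^\top = [C^\top \mid -I_k]$ the lemma gives
\[
\det\Bigl(\sum_{i=1}^n Y_i K Y_i^\top\Bigr) = \det(K)^n \det\bigl(D^\top (I_n \otimes K^{-1}) D\bigr).
\]
Substituting into the definition~\eqref{eqn:gDefinition} of $g$ and expanding the logarithm yields
\[
g(K) = m_2 n \log\det(K) + m_2 \log\det\bigl(D^\top (I_n \otimes K^{-1}) D\bigr) - m_1 \log\det(K),
\]
and collecting the $\log\det(K)$ terms with $nm_2 - m_1 = k$ produces precisely $m_2\log\det(D^\top(I_n\otimes K^{-1})D) + k\log\det(K)$, the objective in~\eqref{eqn:KSimpleEquation}. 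Thus the two minimization problems share the minimizing class $[\hat{K}_2]$, and the MLE is $\hat{K}_2\otimes\hat{K}_1(\hat{K}_2)$.

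I expect essentially no obstacle here, since the substantive content lives in Lemma~\ref{lem:DeterminantReduction}; the proposition is a packaging of the preceding reductions. The only point genuinely requiring care is keeping the two invariances consistent: the additive-constant invariance~\eqref{eq:gtransformed} justifies computing $\hat{K}_2$ from canonical data, while the scaling invariance $c\hat{K}_2\otimes\hat{K}_1(c\hat{K}_2)=\hat{K}_2\otimes\hat{K}_1(\hat{K}_2)$ guarantees that the resulting estimator is well defined independently of the chosen representative. Confirming that these two facts combine cleanly is the main thing to verify before declaring the reformulation complete.
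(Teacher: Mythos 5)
Your proposal is correct and follows exactly the route the paper intends: the proposition is stated as a summary of Lemma~\ref{lem:ProfLikEquivalence}, the group-action reduction to the canonical form \eqref{eq:groupaction}, and Lemma~\ref{lem:DeterminantReduction}, and your substitution $m_2 n\log\det(K) - m_1\log\det(K) = k\log\det(K)$ recovers the objective in \eqref{eqn:KSimpleEquation} precisely as the paper does. The care you take with the two invariances (additive constant under the left action, scale invariance of $\hat{K}_2\otimes\hat{K}_1(\hat{K}_2)$) matches the paper's own discussion and closes the argument.
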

As matrix inversion is a well-defined bijective transformation of $\mathbb{P}(\PD(m_2))$, an equivalent formulation of \eqref{eqn:KSimpleEquation} is
   \begin{align}
  \label{eqn:SigmaSimpleEquation}
       [\hat{K}_2^{-1}] = \underset{\Sigma \in \mathbb{P}(\PD(m_2))}{\argmin} \; m_2\log\det\left(D^\top (I_n \otimes \Sigma)D \right) - k\log\det(\Sigma).
   \end{align}

Of special interest is the case where $k = 1$ so that $nm_2 = m_1 + 1$ and $D^\top(I_n \otimes K^{-1})D$ is a scalar. A closed-form expression for the Kronecker MLE exists in this setting:

\begin{theorem}
\label{prop:k1MLEExpression}
    If $m_1 +1 = n m_2$,
the Kronecker MLE exists uniquely for a sample of $n$ generic matrices if and only if $n \geq m_2$. If, as specified in Lemma \ref{lem:DeterminantReduction}, $D^\top = [d_1^\top,\ldots,d_n^\top]$, $d_i \in \mathbb{R}^{m_2}$, then 
\begin{align*}
    \hat{K}_2 = 
    \sum_{i = 1}^n d_id_i^\top
\end{align*}
is the unique minimizer of \eqref{eqn:KSimpleEquation}. Consequently, for this sample size the ML degree is one. 
\end{theorem}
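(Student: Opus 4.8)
The plan is to specialize the reduced optimization problem of Proposition~\ref{prop:resultsummary} to the case $k=1$ and to solve its score equation in closed form. When $k=1$ the matrix $D$ is a single column, so writing $D^\top=[d_1^\top,\ldots,d_n^\top]$ with $d_i\in\mathbb{R}^{m_2}$, the quantity $D^\top(I_n\otimes K^{-1})D$ is the scalar $\sum_{i=1}^n d_i^\top K^{-1}d_i=\tr(K^{-1}M)$, where I set $M\coloneqq\sum_{i=1}^n d_id_i^\top$. Since $k=1$ forces $\det$ of a scalar and $k\log\det K=\log\det K$, the objective in \eqref{eqn:KSimpleEquation} collapses to
\[
h(K)=m_2\log\tr(K^{-1}M)+\log\det K,
\]
to be minimized over $[K]\in\mathbb{P}(\PD(m_2))$.

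First I would compute the score equation. Using $\nabla_K\log\det K=K^{-1}$ and $\nabla_K\tr(K^{-1}M)=-K^{-1}MK^{-1}$, setting $\nabla_K h=0$ and clearing the nonzero scalar $\tr(K^{-1}M)$ gives $-m_2K^{-1}MK^{-1}+\tr(K^{-1}M)K^{-1}=0$; multiplying on the left and right by $K$ yields
\[
K=\frac{m_2}{\tr(K^{-1}M)}\,M,
\]
so that $K\propto M$. Substituting $K=cM$ back in confirms the equation holds for every scale $c$, which matches the scale invariance of $h$; hence the critical locus is the single projective point $[\hat K_2]=[M]$, and the representative $\hat K_2=\sum_{i=1}^n d_id_i^\top$ solves \eqref{eqn:KSimpleEquation}, as claimed.

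Next I would settle existence and uniqueness. The representative $\hat K_2=M$ lies in $\PD(m_2)$ precisely when $d_1,\ldots,d_n$ span $\mathbb{R}^{m_2}$; as $M$ is a sum of $n$ rank-one matrices, this holds for generic data if and only if $n\geq m_2$. When $n\geq m_2$ the critical point exists and, invoking the strict geodesic convexity of $g$ recorded after \eqref{eq:SampleSizeReq}, it is the unique minimizer, so by Lemma~\ref{lem:ProfLikEquivalence} the Kronecker MLE exists and equals $\hat K_2\otimes\hat K_1(\hat K_2)$. For the converse, when $n<m_2$ the matrix $M$ is singular; choosing a null vector $v$ of $M$ and letting the corresponding eigenvalue of $K^{-1}$ grow keeps $\tr(K^{-1}M)$ bounded while driving $\det K\to 0$, so $h$ is unbounded below along a path leaving every compact subset of $\mathbb{P}(\PD(m_2))$ and no minimizer exists.

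Finally, the ML degree claim follows by counting complex solutions. Over $\mathbb{C}$ and for generic data $M$ is invertible, and the same manipulation forces $K\propto M$, a single point of complex projective space, so $\mathrm{MLMult}=1$ generically and the ML degree is one. Rationality is then immediate: $C$ is a rational function of $Y$ via the group action of Section~\ref{subsec:groupaction}, whence the $d_i$, the matrix $M$, and therefore $\hat K_2$ and $\hat K_1(\hat K_2)$ are all rational in the data. I expect the main obstacle to be the bookkeeping of the scale-invariant formulation: one must check carefully that the one-parameter family $\{cM\}$ is the \emph{entire} complex solution set of the score equation, so that the ML multiplicity is genuinely one, and that the degeneration argument for $n<m_2$ is phrased on projective space rather than on the affine positive definite cone.
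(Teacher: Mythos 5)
Your proposal is correct and follows essentially the same route as the paper's proof: reduce the $k=1$ objective to the linear form $\tr(K^{-1}M)$ with $M=\sum_i d_id_i^\top$, derive $K\propto M$ from the stationarity condition, use geodesic convexity for uniqueness, observe that $M$ is generically positive definite iff $n\geq m_2$, and exhibit a degenerating sequence when $n<m_2$. The only cosmetic difference is that you differentiate the scale-invariant objective directly on the cone, whereas the paper restricts to the determinant-one slice and uses a Lagrange multiplier; the computations are equivalent.
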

\begin{proof}
Let $\mathcal{D}(m_2)$ denote the set $\{\Sigma \in \PD(m_2):\det(\Sigma) = 1\}$.
For this proof we parameterize $\mathbb{P}(\PD(m_2))$ by representatives in $\mathcal{D}(m_2)$. That is, we solve \eqref{eqn:SigmaSimpleEquation} subject to the constraint $\Sigma \in \mathcal{D}(m_2)$. The term $k\log\det(\Sigma)$ in \eqref{eqn:SigmaSimpleEquation} can therefore be ignored. 
We compute
    \begin{align*}
        \det\left(D^\top (I_n \otimes \Sigma)D \right) = \sum_{i = 1}^n d_i^\top \Sigma d_i = \tr\left( \Sigma \sum_{i = 1}^nd_id_i^\top\right).
    \end{align*}
    When $n \geq m_2$ the matrix $\Tilde{D} \coloneqq \sum_{i = 1}^nd_id_i^\top$ is positive definite for generic data. This function is strictly geodesically convex with respect to the affine-invariant geodesics $\gamma_{\Gamma,\Omega}(t) = \Omega^{1/2}(\Omega^{-1/2}\Gamma\Omega^{-1/2})^t \Omega^{1/2}$ \cite{Wiesel2012}. As the set of determinant one matrices is also geodesically convex, there is a unique minimizer to the defining equation of $\hat{K}_2$ in Proposition \ref{prop:resultsummary}. 
    
    If $n < m_2$ and $\Tilde{D}$ is singular we will show that the Kronecker MLE does not exist. Changing coordinates by defining $\Sigma^* = U\Sigma U^\top$ and $\Tilde{D}^* = U\Tilde{D}U^\top$ for an orthogonal $U$ matrix $U$, the optimization problem remains unchanged. That is, the minimizer of $\tr(\Sigma^*\Tilde{D}^*)$ over $\mathcal{D}(m_2)$ multiplied by $U^\top$ on the left and $U$ on the right is a minimizer of the original problem and conversely. Choosing a suitable $U$, it can be assumed without loss of generality that $\Tilde{D} = \text{diag}(\Tilde{D}_{11}, 0)$. By taking $\Sigma = \text{diag}(c\Tilde{D}_{11},\; \det(c\Tilde{D}_{11})^{1/(m_2 - n)}I_{m_2 - n})$, we see that $\det(\Sigma) = 1$ and $\tr(\Sigma \Tilde{D}) = c\Vert \Tilde{D}_{11} \Vert^2_F$. As $c \rightarrow 0$ the value of the objective function $\tr(\Sigma \Tilde{D})$ converges to $0$. However, as $\tr(\Sigma \Tilde{D}) > 0$ for all $\Sigma$ this infimum is not attainable, and the MLE does not exist.  

    By Lagrange multipliers, the optimal $\Sigma$ solves the equations
    \begin{align*}
        \Tilde{D} - \lambda \Sigma^{-1} & = 0,
        \\
        \det(\Sigma) & = 1,
    \end{align*}
    from which we obtain
    \begin{align*}
      \hat{K}_2 =  \hat{\Sigma}^{-1} =\det(\Tilde{D})^{-1/m_2} \Tilde{D}
    \end{align*}
    when $n \geq m_2$.
As any representative of the equivalence class $[\hat{K}_2]$ gives the MLE, the scale factor $\det(\Tilde{D})^{-1/m_2}$ can be dropped.
\end{proof}

The existence and uniqueness result in Theorem \ref{prop:k1MLEExpression} is corroborated by Theorem 1.2.2 in \cite{Makam2021}, where we note that $nm_2 = m_1 + 1$ implies that $\text{gcd}(m_1,m_2) = 1$. 
\bigskip

To summarize, the expression for the Kronecker MLE when $nm_2 = m_1 + 1$ is found by:
\begin{itemize}
    \item Forming the data matrix $Y = [Y_1|\ldots |Y_n]$ and splitting it as $Y = [Y_*| y]$ where $Y_* \in \mathbb{R}^{m_1\times m_1}$ and $y \in \mathbb{R}^{m_1}$.
    \item Computing $Y_*^{-1}y$ and defining the vector $v^\top = [y^\top Y_*^{-\top},-1] \in \mathbb{R}^{nm_2}$.
    \item Setting the MLE of the second Kronecker factor to $\hat{K}_2 = \sum_{i = 1}^n v_iv_i^\top$ where $v^\top = [v_1^\top,\ldots,v_n^\top]$. 
    \item Setting the MLE of the first Kronecker factor to $\hat{K}_1 = \big(\tfrac{1}{nm_2}\sum_{i = 1}^{n} Y_i \hat{K}_2 Y_i^\top\big)^{-1}$.
\end{itemize}
The computational complexity of finding this exact solution for the MLE is essentially the same as a single coordinate-ascent step of the flip-flop algorithm \cite{Dutilleul19}; the complexity of both computations is determined by the inversion or multiplication of $m_1 \times m_1$ matrices that are associated with $K_1$. Note that even when $k = 1$ the flip-flop algorithm will typically not provide an exact solution.  The closed-form expression for the MLE provided above is exact because it chooses the correct starting point for $K_2$ in the flip-flop algorithm.

\begin{example}\rm For illustrative purposes, we apply both the exact formula and the flip-flop algorithm to spatio-temporal data that records the amount of biomass feedstock in a $57 \times 74$ grid in the Gujarat region of India \cite{BiofuelData}. Note that as this state is not rectangular, feedstock quantities are not recorded at certain longitudes and latitudes. We focus on a  $4 \times 23$ subgrid for which feedstock observations are available for $8$ different years. After centering the data we look at the $4 \times 23$ dimensional feedstock matrices over the first six years so that $m_1 = 23$, $m_2 = 4$ and $n = 6$. Table \ref{tab:ExactFlipFlopComp} displays the exact expression for $\hat{K}_2$, normalized to have determinant one, along with the values of the same matrix that are computed for differing number of iterations of the flip-flop algorithm with a starting point of $I_{23} \otimes I_{4}$. From the table it is seen that $500$ flip-flop iterations are sufficient for the normalized $\hat{K}_2$ matrix to agree with the exact solution up to three decimal places, while $200$ flip-flop iterations are not. 
\end{example}

\begin{table}
\small
\centering
\caption{Entries of the $\hat{K}_2$ matrix computed exactly (top left block) and by using the flip-flop algorithm with (reading from left to right) $3,10,50,200$ and $500$ block-ascent iterations, respectively.}
\begin{tabular}{|cccc|cccc|cccc|}
  \hline
  1.451 & -0.183 & -0.690 & 0.738 & 0.998 & 0.008 & -0.038 & 0.022 & 1.278 & 0.608 & 0.376 & 0.457 \\ 
   -0.183 & 2.797 & 0.361 & -0.804 & 0.008 & 1.031 & 0.009 & -0.011 & 0.608 & 0.871 & 0.574 & 0.449 \\ 
   -0.690 & 0.361 & 0.722 & -0.356 & -0.038 & 0.009 & 0.970 & 0.007 & 0.376 & 0.574 & 1.606 & 0.742 \\ 
   0.738 & -0.804 & -0.356 & 1.248 & 0.022 & -0.011 & 0.007 & 1.005 & 0.457 & 0.449 & 0.742 & 1.517 \\ 
   \hline
    1.330 & 0.411 & -0.373 & 0.401 & 1.448 & -0.118 & -0.680 & 0.714 & 1.451 & -0.183 & -0.690 & 0.738 \\ 
   0.411 & 1.132 & 0.108 & 0.078 & -0.118 & 2.634 & 0.321 & -0.718 & -0.183 & 2.797 & 0.361 & -0.804 \\ 
   -0.373 & 0.108 & 0.859 & 0.053 & -0.680 & 0.321 & 0.723 & -0.334 & -0.690 & 0.361 & 0.722 & -0.356 \\ 
   0.401 & 0.078 & 0.053 & 1.228 & 0.714 & -0.718 & -0.334 & 1.227 & 0.738 & -0.804 & -0.356 & 1.248 \\ 
   \hline
\end{tabular}
\label{tab:ExactFlipFlopComp}
\end{table}

\section{Higher ML Degree Likelihood Equations}
\label{sec:degree}


Building on the $k = 1$ case considered previously, we describe the properties of the solution set to the Kronecker likelihood equations for larger $k$ in this section. First we precisely define the likelihood equations and ML multiplicity (see Definition \ref{def:MLMultiplicity}) in the Kronecker context. 

\begin{lemma}
\label{lem:LikelihoodFormula1}
Let $D$ be as in Lemma \ref{lem:DeterminantReduction}, where we decompose $D = [d_{ij}]_{i \in [n],j \in [k]}$ into a block matrix of vectors  $d_{ij} \in \mathbb{R}^{m_2}$. From the $d_{ij}$ we form the $m_2 \times m_2$ matrices $D_{ab} = \sum_{i = 1}^n d_{ia}d_{ib}^\top$. Equation \eqref{eqn:SigmaSimpleEquation} can be expressed in terms of the $D_{ab}$ as
\begin{align}
\label{eqn:LikelihoodDab}
 \underset{\Sigma \in \mathbb{P}(\PD(m_2))}{\argmin} \; m_2\log\det\left( [\tr(D_{ab}\Sigma)]_{a,b \in [k]} \right) - k\log\det(\Sigma) .
\end{align}
\end{lemma}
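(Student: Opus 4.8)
The plan is to show that the $k \times k$ matrix sitting inside the determinant in \eqref{eqn:SigmaSimpleEquation}, namely $D^\top(I_n \otimes \Sigma)D$, is \emph{identical} as a matrix to $[\tr(D_{ab}\Sigma)]_{a,b \in [k]}$. Once this matrix identity is in hand, the two determinants agree identically as functions of $\Sigma$, so the objectives in \eqref{eqn:SigmaSimpleEquation} and \eqref{eqn:LikelihoodDab} are literally the same function on $\mathbb{P}(\PD(m_2))$, and hence have the same argmin. The entire content is therefore an entrywise computation.

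First I would unpack the block structure of $D$. Viewing $D$ as an $n \times k$ array of column blocks $d_{ij} \in \mathbb{R}^{m_2}$, the $b$th column of $D$ is the vertical stacking of $d_{1b}, \ldots, d_{nb}$. Since $I_n \otimes \Sigma$ is block diagonal with $n$ copies of $\Sigma$, the block-diagonal action sends the $b$th column of $D$ to the stacking of $\Sigma d_{1b}, \ldots, \Sigma d_{nb}$. Reading off the $(a,b)$ entry of the triple product as the inner product of the $a$th column of $D$ with this transformed $b$th column then gives
\[
[D^\top(I_n \otimes \Sigma)D]_{ab} = \sum_{i=1}^n d_{ia}^\top \Sigma\, d_{ib}.
\]

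Next I would match this scalar with $\tr(D_{ab}\Sigma)$. Expanding $\tr(D_{ab}\Sigma) = \sum_{i=1}^n \tr(d_{ia} d_{ib}^\top \Sigma)$ and applying cyclicity of the trace yields $\sum_{i=1}^n d_{ib}^\top \Sigma\, d_{ia}$; because each summand is a scalar, it equals its own transpose, and the symmetry of $\Sigma$ gives $d_{ib}^\top \Sigma\, d_{ia} = d_{ia}^\top \Sigma\, d_{ib}$, so $\tr(D_{ab}\Sigma) = \sum_{i=1}^n d_{ia}^\top \Sigma\, d_{ib}$, matching the display above. Thus $D^\top(I_n \otimes \Sigma)D = [\tr(D_{ab}\Sigma)]_{a,b \in [k]}$, and substituting this equality into \eqref{eqn:SigmaSimpleEquation} produces exactly \eqref{eqn:LikelihoodDab}.

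There is no genuine analytic obstacle here: the argument is pure bookkeeping with the block structure of $D$ and $I_n \otimes \Sigma$. The only point deserving a moment's care is the symmetry step, where one uses both that $\Sigma$ is symmetric and that a $1 \times 1$ quadratic form equals its transpose in order to reconcile $d_{ib}^\top \Sigma\, d_{ia}$ with $d_{ia}^\top \Sigma\, d_{ib}$. I would also note that the $D_{ab}$ are precisely the cross-moment (Gram-type) matrices assembled from the $n$ row blocks of $D$, which is the structural reason the determinant collapses to a $k \times k$ form whose entries are linear in $\Sigma$.
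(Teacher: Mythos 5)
Your proof is correct and follows the same route as the paper, which simply observes that the $(a,b)$ entry of $D^\top(I_n\otimes\Sigma)D$ equals $\tr(D_{ab}\Sigma)$; you have merely spelled out the entrywise block computation and the trace-cyclicity step in full detail. No issues.
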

\begin{proof}
    The $(a,b)$ entry of the matrix $D^\top(I_n \otimes \Sigma)D$ is equal to $\tr(D_{ab}\Sigma)$.  
\end{proof}
We can take the gradient of the objective function in \eqref{eqn:LikelihoodDab}, viewed as a function over $\PD(m_2)$, which yields
\begin{align}
\label{eqn:gradFormula}
    m_2 \det\left( [\tr(D_{ab}\Sigma)]_{a,b \in [k]} \right)^{-1}\sum_{\pi \in \mathcal{S}_{k}} \sgn(\pi) \sum_{b = 1}^k \left(\prod_{a \neq b} \tr(D_{a\pi(a)} \Sigma) \right) D_{b\pi(b)}  - k\Sigma^{-1}.
\end{align}
The summation over $\mathcal{S}_k$ is a sum over permutations $\pi$ in the symmetric group. Setting this gradient equal to zero gives us the likelihood equations. As \eqref{eqn:gradFormula} is scale invariant with respect to $\Sigma$, the solution set to the likelihood equations is a projective variety. One measure of the complexity of symbolically solving the optimization problem associated with the Kronecker MLE is the number of points in this variety.

\begin{definition}
    The maximum likelihood multiplicity of the Kronecker MLE problem with respect to a given data set is the number of points in the projective variety that satisfies the system of polynomial equations 
    \begin{align}
    \label{eqn:LikEquations}
         m_2 \det(\Sigma) \sum_{\pi \in \mathcal{S}_{k}} \sgn(\pi) \sum_{b = 1}^k \left(\prod_{a \neq b} \tr(D_{a\pi(a)} \Sigma )\right) D_{b\pi(b)}  
 = k \det\left( [\tr(D_{ab}\Sigma)]_{a,b \in [k]} \right)\ad(\Sigma)
    \end{align}
    over the Zariski open subset of symmetric, complex matrices $\Sigma$ with $\det(\Sigma) \neq 0$ and \;\;\; $ \det\left( [\tr(D_{ab}\Sigma)]_{a,b \in [k]} \right) \neq 0$.
\end{definition}

\begin{example}
    To illustrate this definition in the case of $k = 1$, the equations \eqref{eqn:LikEquations} are
    \begin{align*}
        m_2 \det(\Sigma)D_{11} = k \tr(\Sigma D_{11}) \ad(\Sigma).
    \end{align*}
    As $\det(\Sigma),\tr(\Sigma D_{11}) \neq 0$, we obtain $\Sigma^{-1} \propto D_{11}$, implying that there is a unique solution to the likelihood equations and the ML degree is one.
\end{example}

In a rough sense, the value of $k$ represents the complexity of the likelihood equations, since more $\tr(D_{ab}\Sigma)$ terms appear in \eqref{eqn:LikEquations} for larger values of $k$. We now prove a partial converse to Theorem \ref{prop:k1MLEExpression}, showing that when $k$ is greater than one and the sample size is large enough, there exist data sets that have ML multiplicity greater than one. 

\begin{proposition}
\label{prop:ConverseMlGreaterOne}
    For every $m_2$ and $k > 1$,  if $n \geq m_2k - 1$ there exists a data set for which there are multiple, but finitely many, complex solutions to the likelihood equations. The ML multiplicity is greater than one for this data set. 
\end{proposition}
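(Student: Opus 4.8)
The plan is to reduce the statement to a choice of the symmetric matrices $D_{ab}$ and then to engineer them so that the score equations \eqref{eqn:LikEquations} decouple into the critical‑point equations of a hyperplane‑arrangement master function, whose number of solutions is easy to control. First I would observe that the data enter \eqref{eqn:LikEquations} only through the $D_{ab}=\sum_{i=1}^n d_{ia}d_{ib}^\top$, and that the $m_2k\times m_2k$ block matrix $M=[D_{ab}]_{a,b\in[k]}$ equals $\sum_{i=1}^n \delta_i\delta_i^\top$, where $\delta_i=\mathrm{vec}(\mathcal D_i)$ is the vectorization of the $i$‑th $m_2\times k$ block $\mathcal D_i$ of $D$. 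Hence $M$ may be taken to be any positive semidefinite matrix of rank at most $n$, the only restriction being the fixed $-I_k$ tail of $D$, which pins finitely many coordinates of the last few $\delta_i$ and is harmless for the constructions below. The task thus becomes: choose a positive semidefinite $M$ of rank at most $n$ for which \eqref{eqn:LikEquations} has more than one, but finitely many, projective solutions. (I may assume $m_2\ge 2$, since for $m_2=1$ the space $\mathbb{P}(\PD(1))$ is a single point and the claim is vacuous.)

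The main construction takes $M$ block diagonal with diagonal positive definite blocks: $D_{ab}=0$ for $a\neq b$ and $D_{aa}=\Delta_a=\mathrm{diag}(\Delta_{a1},\dots,\Delta_{am_2})$ with generic positive entries. For such data only the identity permutation survives in \eqref{eqn:LikEquations}, because $\tr(D_{a\pi(a)}\Sigma)=\delta_{a,\pi(a)}\tr(\Delta_a\Sigma)$ forces $\pi(a)=a$ for all $a\neq b$, hence $\pi=\mathrm{id}$; the left‑hand side is then diagonal, and matching it to $k\det([\tr(D_{ab}\Sigma)])\,\ad(\Sigma)$ forces $\ad(\Sigma)$, hence $\Sigma^{-1}$, hence $\Sigma$ to be diagonal on the valid locus. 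Writing $\Sigma=\mathrm{diag}(\sigma_1,\dots,\sigma_{m_2})$ and $L_a=\sum_j\Delta_{aj}\sigma_j=\tr(\Delta_a\Sigma)$, the system collapses to $\tfrac{m_2}{k}\sum_{a}\Delta_{aj}/L_a = 1/\sigma_j$ for $j\in[m_2]$, which are exactly the critical‑point equations of the master function $\prod_a L_a^{m_2}\prod_j\sigma_j^{-k}$ on $\mathbb{P}^{m_2-1}$ minus the arrangement of the $k$ generic hyperplanes $\{L_a=0\}$ and the $m_2$ coordinate hyperplanes. For generic $\Delta_a$ these $m_2+k$ hyperplanes are in general position, the critical locus is finite, and its cardinality is the maximum likelihood degree of the arrangement, $\binom{m_2+k-2}{k-1}\ge 2$ for $k>1$ and $m_2\ge2$ (Varchenko, Orlik–Terao, Huh, Catanese–Hoşten–Khetan–Sturmfels; one should check the exponents are nonresonant, or else simply exhibit two real critical points). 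When $k=2$ the general theory can be avoided, since the equations reduce to the single univariate equation $\sum_j \Delta_{1j}/(\Delta_{1j}+r\Delta_{2j})=m_2/2$ of degree $m_2$ in the projective ratio $r$. As this $M$ has rank $m_2k$, the construction settles every $n\ge m_2k$.

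The remaining and genuinely delicate case is the threshold $n=m_2k-1$, where the rank budget is one short and $M$ must be singular. The natural fix is to drop the rank of one block to $m_2-1$, for instance by setting $\Delta_{km_2}=0$, which preserves the diagonal‑forcing argument and removes one effective hyperplane from the arrangement. One then checks that at least two critical points persist in the valid locus whenever $k\ge 3$ or $m_2\ge 3$; for $m_2=2$ the rank‑one axis‑aligned block makes $L_k$ proportional to a coordinate hyperplane, leaving $k-1$ critical points on $\mathbb{P}^1$, which is at least two precisely when $k\ge3$. I expect the main obstacle to be exactly this degeneration: one must ensure that lowering the rank does not push critical points onto the excluded locus $\{\det\Sigma\cdot\det[\tr(D_{ab}\Sigma)]=0\}$. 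This genuinely fails for the single corner $(m_2,k)=(2,2)$ at $n=3$, where the collapsed arrangement has only two points and no interior critical point, so that the MLE escapes to the boundary; that corner therefore requires a bespoke, non‑block‑diagonal rank‑$(m_2k-1)$ example whose solution set is verified directly to be finite and of size at least two.

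Finally, I would record the standing technical points. Throughout one chooses the parameters generically within each family so that the solution set is zero‑dimensional; one verifies that the chosen witness avoids the excluded locus and that the sample size lies in the existence‑and‑uniqueness regime \eqref{eq:SampleSizeReq}, so that the real positive definite MLE is one of the finitely many solutions and at least one further (real but indefinite, or genuinely complex) solution is present. The realizability of each block‑diagonal $M$ subject to the fixed $-I_k$ tail is a routine bookkeeping check that I would verify by dimension count, exhibiting explicit blocks $\mathcal D_i$ for small parameters and noting that additional sample points only enlarge the available freedom.
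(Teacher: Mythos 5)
Your overall strategy is the same as the paper's: reduce the data to the block Gram matrix $[D_{ab}]_{a,b\in[k]}$, argue that (up to scale and the forced $-I_k$ tail) this matrix can be prescribed, and then pick a structured witness for which the solutions of \eqref{eqn:LikEquations} can be counted. Where you diverge is in the witness and the counting tool: the paper keeps one off-diagonal coupling block $D_{12}=e_1e_2^\top$, forces $\Sigma$ into a three-parameter family, and finishes with an explicit quadratic plus Gr\"obner-basis bounds; you take $[D_{ab}]$ fully block-diagonal with diagonal blocks, so that only the identity permutation survives, $\Sigma$ is forced diagonal, and the score equations become the critical-point equations of a master function of $m_2+k$ hyperplanes in $\mathbb{P}^{m_2-1}$. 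That reduction is correct and attractive (your $k=2$ specialization to a degree-$m_2$ univariate equation is a genuinely elementary way to get at least two solutions), and your insistence that $\operatorname{rank}[D_{ab}]\le n$ is a sharp observation: a positive definite $[D_{ab}]$ requires $n\ge m_2k$, a constraint the paper's own realizability step passes over at the boundary sample size.

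However, as written the argument does not prove the proposition for all parameters it claims. First, for general $k$ your count $\binom{m_2+k-2}{k-1}$ rests on the Orlik--Terao/Varchenko theorem, which requires nonresonant weights; your weights are the fixed integers $m_2$ and $-k$, and you defer the nonresonance check (and with it both the finiteness and the ``at least two'' conclusions) rather than carrying it out or exhibiting a second critical point directly. Second, the threshold case $n=m_2k-1$ is where the construction is genuinely under strain, and your proposed fix is flawed in a concrete way: setting $\Delta_{km_2}=0$ kills exactly the diagonal entry of $[D_{ab}]$ in the coordinate where $\delta_n$ is forced to equal $-1$ by the $-I_k$ block of $D$ (Lemma \ref{lem:DeterminantReduction}), so $\delta_n\notin\operatorname{range}([D_{ab}])$ and no scaling makes $[D_{ab}]-c\,\delta_n\delta_n^\top$ positive semidefinite; the rank must be dropped in a coordinate compatible with the forced tail, and this is not checked. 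Third, you explicitly leave the corner $(m_2,k)=(2,2)$, $n=3$ unresolved, promising a ``bespoke'' example that is never produced. Since the statement asserts the conclusion for every $m_2$, every $k>1$, and every $n\ge m_2k-1$, these are not cosmetic omissions: the proof is complete only for $n\ge m_2k$ with $k=2$ (where your univariate reduction works unconditionally), and is a plausible but unverified sketch elsewhere.
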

\begin{proof} 
The proof proceeds by making a convenient choice of the $[D_{ab}]_{a,b\in [k]}$ matrix and solving the resulting system of likelihood equations. The likelihood equations will have more than one, but only finitely many, solutions.  



We begin by showing that for a large enough $n$, any positive definite matrix $[D_{ab}]_{a,b\in [k]}$ can be realized in the likelihood equation \eqref{eqn:LikelihoodDab}. Note that this likelihood equation is scale invariant in $\Sigma$ so that 
\begin{align*}
m_2\log\det\left( [\tr(cD_{ab} c^{-1}\Sigma)]_{a,b \in [k]} \right) - k\log\det(c^{-1}\Sigma) + k\log(c^{-m_2}).
\end{align*}
After reparameterizing $\Sigma \mapsto c^{-1}\Sigma$, we find that if the matrix $[D_{ab}]_{a,b\in [k]}$ appears in the likelihood equation $c \times [D_{ab}]_{a,b\in [k]}$ appears in the likelihood equation with respect to the parameter $c^{-1}\Sigma$. As the number of solutions is invariant to this reparameterization, it suffices to show that for any positive definite matrix $A \in \mathbb{R}^{m_2k \times m_2 k}$ there exists a matrix $D$ and a $c > 0$ such that $A = c \times [D_{ab}]_{a,b\in [k]}$, where $D$ and $[D_{ab}]_{a,b\in [k]}$ are defined as in Lemma \ref{lem:LikelihoodFormula1}.

If in block form $D = [d_{ij}]_{i \in [n], j \in [k]}$, let $d_{i*}^\top = [d_{i1}^\top,\ldots,d_{ik}^\top]$. By definition, $[D_{ab}]_{a,b\in [k]} = 
\sum_{i = 1}^n d_{i*}d_{i*}^\top$. The only constraint on the $d_{i*}$ vectors is certain entries of $d_{n*}$ are fixed due to the presence of the $I_k$ block that appears in $D$ (Lemma \ref{lem:DeterminantReduction}). We set the other entries of the lower $m_2 \times k$ block of $D$ to be zero, which fixes the value of $d_{n*}$. As $A$ is (strictly) positive definite there exists a $c > 0$ such that $A - cd_{n*}d_{n*}^\top$ is positive semi-definite with rank $m_2k - 1$. By an eigendecomposition of $A - cd_{n*}d_{n*}^\top$, as long as $n \geq m_2k - 1$ there exists $d_{1*},\ldots,d_{n-1 *}$ such that $A - cd_{n*}d_{n*}^\top = c\sum_{i < n}d_{i*}d_{i*}^\top$. This proves the desired result that $A = c\times [D_{ab}]_{a,b\in [k]}$ for some $c$ and $D$ as long as $n \geq m_2k - 1$.   

We break the choice of $[D_{ab}]_{a,b\in [k]}$ into two cases. The first case has $m_2 > 2$ where we choose 
\begin{align*}
        D_{11} = 2I_{m_2},\;  D_{12} =e_1e_2^\top,\; D_{22} = 2I_{m_2} + e_1e_1^\top + e_2e_2^\top, \; D_{ii} = I_{m_2} \;\text{for} \  i > 2,\; D_{ij} = 0 
    \; \text{for} \ i \neq j,
\end{align*}
with $e_i$ being the $i$th standard basis vector.  The second case has $m_2 = 2$ and we take 
\begin{align*}
    D_{11} = 2I_{m_2} ,  D_{12} =e_1e_2^\top, D_{22} = 2I_{m_2} + e_1e_1^\top, \; \; D_{ii} = I_{m_2} \;\text{for} \  i > 2,\; D_{ij} = 0 \; \text{for} \ i \neq j.
\end{align*}
The likelihood equation \eqref{eqn:gradFormula} implies that any solution $\Sigma^{-1}$ is in the span of the matrices $D_{ij}$. Focusing on case one, any solution $\Sigma^{-1}$ has the form $\alpha I_{m_2} + \beta (e_1e_1^\top + e_2e_2^\top) + \gamma (e_1e_2^\top + e_2e_1^\top)$ for some coefficients $\alpha,\beta,\gamma$. Consequently, any solution $\Sigma$ is block-diagonal with the form 
\[\Sigma = \text{diag}(\begin{bmatrix}
    a & b
    \\
    b & a
\end{bmatrix},c\ldots,c).
\]
This form for $\Sigma$ could be substituted into equation \eqref{eqn:LikEquations} and solved for $a,b,c$. Instead, for simplicity we substitute this form for $\Sigma$ back into the original likelihood \eqref{eqn:LikelihoodDab} and take derivatives with respect to $a,b,c$. 

 A justification that all critical points of the likelihood are preserved by this procedure is as follows.  We can reparameterize $\Sigma$ by an invertible linear map $\phi$ on the vector space of $m_2 \times m_2$ symmetric matrices, where the first three elements of $\phi(\Sigma)$ are $(\sigma_{11},\sigma_{12},\sigma_{33}) = (a,b,c)$. The remaining elements of $\phi(\Sigma)$ are the off-diagonal entries of $\Sigma$, $\sigma_{ij}$, $i < j$, $(i,j) \neq (1,2)$ along with the diagonal entry differences $\sigma_{11} - \sigma_{22}$, $\sigma_{33} - \sigma_{jj},$ $j > 3$. We denote these remaining elements by $y$ under the reparameterization by $\phi$ so that $\phi(\Sigma) = (a,b,c,y)$. As critical points are invariant under the reparameterization, finding the critical points of $\ell(\Sigma)$ is the same as finding the critical points of $\ell(\phi^{-1}(a,b,c,y))$. It has been established above that a necessary condition for $\nabla(\ell \circ \phi^{-1})(a,b,c,y) = 0$ is $y = 0$. From the expression for the likelihood gradient \eqref{eqn:gradFormula} it is seen that $\nabla_y(\ell \circ \phi^{-1})(a,b,c,0) = 0$ for all $a,b,c$. To find all of the critical points it therefore suffices to solve the system $\nabla_{(a,b,c)}(\ell\circ\phi^{-1})(a,b,c,0) = 0$.

 In solving $\nabla_{(a,b,c)}(\ell\circ\phi^{-1})(a,b,c,0) = 0$ there remains a scale indeterminacy in the entries of $\Sigma$, or equivalently $(a,b,c)$. Without loss of generality we assume that $a$ is equal to $1$.  It will be convenient to reparameterize $(b,c)$ further by defining $t = 2 + (m_2-2)c$ to be the trace of $\Sigma$. The likelihood appearing in \eqref{eqn:LikelihoodDab} as a function of $(b,t)$ is   
\begin{align*}
  \Tilde{\ell}(b,t) \coloneqq  m_2\log\big( (4t(t+1) - b^2)t^{k-2}\big) - k\log\big((1 - b^2)(m_2-2)^{2 - m_2}(t-2)^{m_2-2}\big).
\end{align*}
It remains to solve the system of equations $\nabla\Tilde{\ell}(b,t) = 0$:
\begin{align*}
    \frac{-2m_2b}{4t(t+1) - b^2} + \frac{2kb}{1-b^2} = 0,
    \\
    m_2\left(\frac{8t + 4}{4t(t+1) - b^2} + \frac{k-2}{t} \right) - \frac{k(m_2-2)}{t-2} = 0.
\end{align*}
One solution to the first equation is $b = 0$. Setting $b = 0$ in the second equation gives
\begin{align*}
    m_2\left( \frac{k-1}{t} + \frac{1}{t+1}\right) - \frac{k(m_2-2)}{t-2} = 0,
\end{align*}
which simplifies to 
\begin{align*}
    \frac{2kt^2 +(2k - m_2-2km_2)t + 2m_2 - 2km_2}{t(t+1)(t-2)} = 0.
\end{align*}
This rational equation has two roots counting multiplicity and it is easily checked that for $m_2 > 2, k \geq 2$ the numerator is non-zero at the points $t = 0,-1,2$ where the denominator vanishes. Gr\"obner basis calculations in Macaulay 2 which are provided in the supplementary material show that the system $\nabla\Tilde{\ell}(b,t) = 0$ has at most five solutions.

The second case is handled similarly, except that there are no longer any equality or zero constraints on the entries of $\Sigma  = \begin{bmatrix}
    a & b
    \\
    b & c
\end{bmatrix}$. Setting $a = 1$ gives the likelihood
\begin{align*}
    m_2\log\big( (2(c+1)(2c+3) - b^2)(c+1)^{k-2}\big) - k\log(c - b^2).
\end{align*}
As before, $b = 0$ solves the likelihood equation for $b$. Substituting this into the remaining equation for the partial derivative of $c$ yields 
\begin{align*}
    m_2\left( \frac{k-1}{c+1} + \frac{2}{2c+3} \right) - \frac{k}{c} = \frac{(2km_2 - 2k)c^2 + (3km_2 - m_2 - 5k)c - 3k}{(c+1)(2c+3)c} = 0.
\end{align*}
The numerator of this equation has two roots counting multiplicity, neither of which makes the denominator vanish. Gr\"obner basis calculations show that there are at most four solutions to the likelihood equations, as needed. 
\end{proof}

\section{Conclusion and Open Questions}

In this paper we show that the Kronecker covariance MLE exists as a rational function of the data under a coprime relation between the sample size and the dimensions of the Kronecker factors. The central question for future work is whether the two cases identified in Theorem \ref{thm:m1=m2+1}  and Theorem \ref{prop:k1MLEExpression-intro} are all cases for which the MLE is a rational function of the data. A tenuous connection between both of the known cases where the MLE is a rational function of the data is that the $m_1 \times m_2 \times n$ tensor $Y$ of observations with $Y_{ijl} \coloneqq [Y_l]_{ij}$ has a simple canonical form. If $k = 1$ this corresponds to the form described in \cite{murakami1998case}, while if $n = 2$ this corresponds to the Kronecker canonical form described in \cite{ten1999simplicity,landsberg2011tensors}.

Proposition \ref{prop:ConverseMlGreaterOne} provides a partial description of the set of sample sizes and array dimensions where there exists a data set that has an ML multiplicity that is larger than one. The sample size condition $n \geq m_2 k-1$ is used in the proof to construct $D_{ab}$ matrices that have a tractable form. For smaller sample sizes the $-I_k$ block that appears in $D$ can no longer be ignored and the situation is more delicate, as illustrated by the setting in Theorem \ref{prop:k1MLEExpression-intro} that has $n = 2$ and $k = m_2 - 1 > 1$. 

While Proposition \ref{prop:ConverseMlGreaterOne} only makes a statement about ML multiplicity we hypothesize that this proposition also shows that the ML degree is larger than one. Conventional wisdom dictates if the ML multiplicity is larger than one for a single data set, this implies that the ML multiplicity is larger than one for generic data sets, since heuristically the degrees of polynomials in an equation system can only drop outside of a generic set (a non-empty, Zariski open set). For example, the equation $dx^2 + 2x + 1 = 0$ has two complex solutions in $x$, counting multiplicity, as long as $d \neq 0$, but has only one solution when $d = 0$. Thus the degree has dropped outside of the generic set $\mathbb{R}\backslash \{0\}$.  However, this wisdom does not hold in general, and such genericity statements depend on the structure of the likelihood equations. A simple counter-example is that the ideal $I = \langle x(x-d_1),x(x-d_2)\rangle \subset \mathbb{C}[x]$, which depends on the data $[d_1,d_2]^\top \in \mathbb{R}^2$, has only one point in the variety $V(I) = \{0\}$  generically, but has two points in $V(I) = \{0,d_1\}$ when $d_1 = d_2$. A direction of future research is to investigate general conditions on the likelihood equations that ensure that the ML multiplicity only drops outside of a generic set. It would then suffice to find a single data set with ML multiplicity larger than one to show that the ML degree is also larger than one.


\begin{table}[t]
\centering
\caption{Maximum likelihood degrees for the model $m_2=2$.}
\begin{tabular}{ |c|c|c|c|c|c|c|c|c|c|c|c|c| } 
 \hline
$m_1$ \textbackslash $n$ & 1 & 2 & 3 & 4 & 5 & 6 & 7 & 8 & 9 & 10\\ 
 \hline
 2 &   &  & 3 & 3 & 3 & 3 & 3 & 3 & 3 & 3\\ 
 3 & 0 & 1 & 4 & 7 & 7 & 7 & 7 & 7 & 7 & 7 \\ 
 4 & 0 &  & 3 & 9 & 13 & 13 & 13 & 13 & 13 & 13 \\
 5 & 0 & 0 & 1 & 7 & 16 & 21 & 21 & 21 & 21 & 21 \\
 6 & 0 & 0 &  & 3 & 13 & 25 & 31 & 31 & 31 & 31 \\
 7 & 0 & 0 & 0 & 1 & 7 & 21 & 36 & 43 & 43 & 43 \\
 8 & 0 & 0 & 0 &  & 3 & 13 & 31 & 49 & 57 & 57 \\
 9 & 0 & 0 & 0 & 0 & 1 & 7 & 21 & 43 & 64 & 73 \\
 \hline
\end{tabular}
\label{table:MLD}
\end{table}

To conclude, Table \ref{table:MLD} displays the ML degrees in the model with $m_2 = 2$ as a function of $n$ and $m_1$, computed from randomly chosen data sets. Empty positions in the table correspond to cases where there are infinitely many solutions to the system of equations. Notice that the ML degree is one when $k = 1$ in the table and that the ML degree of two does not appear. Inspecting the table, one observes that the ML degree grows and then stabilizes at the value $m_1^2 - m_1 + 1$.  For instance, $7^2 - 7 + 1 = 43$ appears in the row corresponding to $m_1 = 7$ for $n\ge 8$.   Indeed, it is apparent that the ML degree for fixed $(m_1,m_2)$ does not change once $n\ge m_1m_2$ as for such sample sizes the sample covariance matrix is a generic positive definite matrix. However, the stabilization of the ML degree occurs for smaller values of $n$ and it is an open problem to determine at what sample size the ML degree stabilizes to a limiting value, and to determine an expression for the limiting value in terms of $(m_1,m_2)$.

\appendix
\section{Macaulay 2 Code}


\begin{code}
Code for computing Gr\"obner bases of the likelihood equation ideals in Proposition \ref{prop:ConverseMlGreaterOne}.
\begin{verbatim}
-- Code for case one. 

R = QQ[y,t,b,c,m2,k, MonomialOrder => Lex];
e1 = -2*m2*b/(4*t*(t+1) - b^2) + 2*k*b/(1-b^2);
e2 = m2*((8*t+4)/(4*t*(t+1) - b^2) + (k-2)/t) - k*(m2-2)/(t-2);
I1 = ideal(numerator(e1), numerator(e2),
     y*denominator(e1)*denominator(e2) - 1);
M1 = gens gb I1;
-- We obtain a quintic in b.  
M1_0
M1_1
-- It remains to check that no values of (m2, k) cause the 
-- polynomial M1_0 to vanish entirely.
II1 = ideal(4*m2^2*k^2 + 4*m2^2*k + m2^2 - 20*m2*k^2 - 10*m2*k + 25*k^2, 
    -8*m2^2*k^2 - 8*m2^2*k - 2*m2^2 - 8*m2*k^2 + 45*m2*k + 21*k^2,
    4*m2^2*k^2 + 4*m2^2*k + m2^2 + 28*m2*k^2 - 35*m2*k);
gens gb II1

-- Code for case two.

e3 = m2*((8*c + 10)/(2*(c+1)*(2*c + 3) - b^2) + (k-2)/(c+1)) - k/(c-b^2)
e4 = -2*b*m2/(2*(c+1)*(2*c + 3) - b^2) + 2*k*b/(c-b^2)
I2  = ideal(numerator(e3), numerator(e4),
      y*denominator(e3)*denominator(e4) - 1);
M2 = gens gb I2;
-- We obtain a quartic in c.
M2_0
M2_1
II2 = ideal(8*m2^2*k^2 - 24*m2*k^2 + 16*k^2,
    30*m2^2*k^2 - 6*m2^2*k - 96*m2*k^2 + 10*m2*k + 74*k^2,
    39*m2^2*k^2 - 18*m2^2*k + m2^2 - 138*m2*k^2 + 28*m2*k + 127*k^2,
    18*m2^2*k^2 - 15*m2^2*k + 3*m2^2 - 84*m2*k^2 + 27*m2*k + 96*k^2);
gens gb II2
\end{verbatim}
\end{code}

\begin{code}
Code for finding the ML degrees in Table \ref{table:MLD}.
   \begin{verbatim}
--This function finds the ML Degree for m2 = 2
MLD = (m1,n) -> (
    m2 = 2;
    R = QQ[k_(1,2),k_(2,2),MonomialOrder=>Lex];
    for i from 1 to n do
        Y_i = matrix for j1 from 1 to m1 list for j2 from 1 to m2 list 
            sub(random(17),R);
    K = matrix {{1,k_(1,2)},{k_(1,2),k_(2,2)}};
    g1 = det sum for i from 1 to n list (Y_i)*K*(transpose Y_i);
    g2 = det K;
    I0 = ideal for e in {k_(2,2),k_(1,2)} 
                  list  m2*g2*diff(e,g1)-m1*g1*diff(e,g2);
    I = saturate(I0, g1*g2*k_(2,2));
    d = if (dim I == 0) then degree I else 0;
    return (d)
)
--We use the function to generate Table 2.
--Running time is approximately 20 minutes on an Apple M2 processor.
MLDmatrix = matrix for m1 from 2 to 9 list for n from 1 to 10 list MLD(m1,n)
   \end{verbatim}
\end{code}

\section*{Acknowledgements}
This project was supported by the European Research Council (ERC) under the European Union’s Horizon 2020 research and innovation programme (Grant agreement No.~883818). The authors wish to thank Orlando Marigliano for helpful discussions regarding ML degree.

\bibliographystyle{alpha}
\bibliography{bibliographyK}

\end{document}